\documentclass[12pt]{amsart}

\usepackage[margin = 1.2in]{geometry}
\setlength{\marginparwidth}{1in}

\usepackage{xcolor}

\makeatletter
\def\mathcolor#1#{\@mathcolor{#1}}
\def\@mathcolor#1#2#3{
  \protect\leavevmode
  \begingroup\color#1{#2}#3\endgroup
}
\makeatother

\usepackage{algorithm}
\usepackage{algpseudocode}

\newcounter{Halgorithm}
\floatevery{algorithm}{\stepcounter{Halgorithm}}

\usepackage{amsmath,amsthm,amssymb,amsfonts}
\usepackage{mathtools}
\usepackage{bm} 
\usepackage{upgreek} 
\usepackage{pifont} 
\usepackage{blkarray} 
\usepackage[all]{xy} 
\usepackage{enumitem} 
\usepackage{subdepth} 
\usepackage{fouridx} 
\usepackage{todonotes}
\usepackage{graphicx}
\usepackage[mode=build]{standalone}
\usepackage{tikz}
\usepackage{calc}
\usetikzlibrary{calc}
\usetikzlibrary{positioning}
\usetikzlibrary{decorations.pathmorphing,decorations.markings}
\usetikzlibrary{arrows.meta}
\usetikzlibrary{backgrounds}
\usetikzlibrary{fit}

\usepackage[colorlinks,linkcolor=red,anchorcolor=blue,citecolor=green]{hyperref}
\usepackage[capitalise,noabbrev]{cleveref}

\newtheorem{theorem}{Theorem}

\newtheorem{corollary}[theorem]{Corollary}

\newtheorem{example}[theorem]{Example}
\newtheorem{lemma}[theorem]{Lemma}

\newtheorem{remark}[theorem]{Remark}
\newtheorem{question}[theorem]{Question}
\newtheorem{proposition}[theorem]{Proposition}

\DeclareMathOperator{\dist}{dist}
\DeclareMathOperator{\Gram}{Gr}
\DeclareMathOperator{\Gal}{Gal}
\DeclareMathOperator{\diag}{diag}

\DeclarePairedDelimiter{\set}{\{}{\}}
\DeclarePairedDelimiter{\abs}{\vert}{\vert}

\DeclarePairedDelimiter{\lrangle}{\langle}{\rangle}

\newcommand{\cplxconj}[1]{\mathop{\overline{#1}}}

\begin{document}

\title{Classification of partially metric Q-polynomial association schemes with $m_{\mathtt 1} = 4$}

\author{Da Zhao}
\address{School of Mathematical Sciences, Shanghai Jiao Tong University, Shanghai, China.}
\email[Da Zhao]{jasonzd@sjtu.edu.cn}

\subjclass[2010]{Primary 05E30; Secondary 52C99}

\keywords{Association scheme, spherical embedding, 4 dimensional Euclidean geometry, regular polytope}

\begin{abstract}
We classify the Q-polynomial association schemes with $m_{1} = 4$ which are partially metric with respect to the nearest neighbourhood relation. 
An association scheme is partially metric with respect to a relation $R_1$ if the scheme graph of $R_2$ is exactly the distance-2 graph of the scheme graph of $R_1$ under a certain ordering of the relations.

\end{abstract}

\maketitle

\section{Background}

The classification problem plays an import role in the research of association scheme. 
Hanaki and Miyamoto give a list of association schemes up to 30 vertices \cite{MR1972022}. 
There are studies on association schemes of small degree including \cite{MR1701285,MR2755088}. 
Bannai and Bannai finish the classification of primitive association schemes with multiplicity three \cite{MR2212140}. 
The Terwilliger algebra rises from the classification project of P and Q-polynomial association schemes. 
The classification of P-polynomial association schemes (distance-regular graphs) of valency three or four is done in \cite{MR850954,MR1701280}. 
Though Bannai-Ito conjecture and its dual, proved by Bang-Dubickas-Koolen-Moulton\cite{MR3281131} and Martin-Williford\cite{MR2494443}, guarantees the finiteness of P-polynomial (Q-polynomial) association schemes of given valency (multiplicity), the classification of Q-polynomial association schemes of small multiplicity is still yet undone. 
Bannai and the author revisit the paper \cite{MR2212140} and obtain the classification of Q-polynomial association schemes with $m_{\mathtt 1} = 3$ in \cite{bannai2017spherical}. 
We aim to finish the classification of Q-polynomial association schemes with $m_{\mathtt 1} = 4$.

This paper is organized as follows. In \cref{sec:Embossing}, we introduce association schemes and related concepts. In \cref{sec:Cross-stitch}, we state our main result.
The tools we used are exhibited in \cref{sec:derailment} and the proof of the main theorem is given in \cref{sec:Crocus}.

\section{Preliminary} \label{sec:Embossing}

\subsection{Graphs}

	A (simple) \emph{graph} $\Gamma$ consists of vertices $V$ and edges $E \subseteq \binom{V}{2}$. 
	Every graph in this paper is simple unless specified otherwise. 
	For a graph $\Gamma$, its complementary graph $\overline{\Gamma}$ is the graph with the same vertex set as $\Gamma$ and there is an edge $uv$ in $\overline{\Gamma}$ if and only if $uv$ is not an edge in $\Gamma$.
	A \emph{walk} on the graph $\Gamma$ (from vertex $x$ to vertex $y$ of length $\ell$) is a sequence of vertices $x=v_0, v_1, \ldots v_{\ell} = y$ such that $v_{i-1}v_i$ is an edge for $i = 1,2, \ldots, \ell$. 
	The \emph{distance} $\dist(x,y)$ between two vertices $x$ and $y$ is the length of the shortest walk from $x$ to $y$ (if there are no such walks, then the distance is $\infty$). 
	The \emph{diameter} $D(\Gamma)$ of a graph $\Gamma$ is the maximum distance between vertices in $\Gamma$. 
	For a graph $\Gamma$ of diameter $D$, its distance-$i$ graph $\Gamma_i$ ($1 \leq i \leq D$) is the graph which shares the vertices with $\Gamma$ and whose edges are the pairs of vertices of distance $i$ in $\Gamma$. 
	The set of vertices at distance $i$ from $x$ is denoted by $\Gamma_i(x)$. 
	Fix a base vertex $x$, we have a partition of the vertex set with respect to the distance of a vertex to $x$. 
	Given $y \in \Gamma_i(x)$, we define $c_i(x,y) = \abs{\Gamma_{i-1}(x) \cap \Gamma(y)}$, $a_i(x,y) = \abs{\Gamma_{i}(x) \cap \Gamma(y)}$, and $b_i(x,y) = \abs{\Gamma_{i+1}(x) \cap \Gamma(y)}$. 
	These numbers are useful to describe the structure of the graph.

	The \emph{adjacency matrix} $A = A(\Gamma) = (a_{uv})$ of a graph $\Gamma$ is a square matrix indexed by the vertex set $V$, where $a_{uv}$ is equal to $1$ if $uv$ is an edge and $0$ otherwise.
	Given the adjacency matrix of a graph $\Gamma$, the adjacency matrix of its complementary graph $\overline{\Gamma}$ is equal to $\overline{A} = J - I - A$, where $J$ is the all-one matrix and $I$ is the identity matrix.

\subsection{Association schemes}

		Let $X$ be a finite set. 
		A (symmetric) \emph{association scheme} $\mathfrak{X}$ on $X$ with $d$ classes is a pair $(X,\mathcal{R})$ such that 
		\begin{enumerate}
			\item $\mathcal{R} = \set{R_0, R_1, \ldots, R_d}$ is a partition of $X \times X$;
			\item $R_0 = \set{(x,x) : x \in X}$;
			\item $R_i = R_i^T := \set{(y,x) : (x,y) \in R_i}$ for each $0 \leq i \leq d$;
			\item For every pair $(x,y) \in R_h$, there are exactly $p_{ij}^h$ elements $z \in X$ such that $(x,z) \in R_i$ and $(z,y) \in R_j$, where $p_{ij}^h$ is a constant only depends on $i,j$ and $h$.
		\end{enumerate}

	The \emph{size} of the association scheme $\mathfrak{X}$ is the cardinality of $X$ and the \emph{degree} of the association scheme is $d$. 
	The elements of $\mathcal{R}$ are called the \emph{relations} of the association scheme, among which $R_0$ is called the trivial relation. 
	Every non-trivial relation $R_i$ ($i>0$) can be regarded as a graph with vertex set $X$ and two vertices $x$ and $y$ are adjacent if and only if $(x,y) \in R_i$. 
	We call this graph the \emph{scheme graph} of $R_i$ and call its adjacency matrix $A_i$ the relation matrix ($A_0=I$ by convention, though the graph is not simple). 
	Note that we do NOT use the notation $\Gamma_i$ for the scheme graph of $R_i$ to avoid confusion with the distance-$i$ graph of a graph $\Gamma$. 
	By using the relation matrices, the conditions in the definition of association scheme are equivalent to the following conditions. 
	\begin{enumerate}
		\item $\sum_{i=0}^d A_i = J$;
		\item $A_0 = I$;
		\item $A_i = A_i^T$;
		\item $A_i A_j = \sum_{h=0}^d p_{ij}^h A_h$.
	\end{enumerate}

	There are more general definitions of association scheme. 
	Without explanation, we simply exhibit them as follows: symmetric association schemes $\subsetneq$ commutative association schemes $\subsetneq$ (general) association schemes $=$ homogeneous coherent configurations $\subsetneq$ coherent configurations. 
	We will focus on symmetric association schemes in this paper. 
	The reader is referred to \cite{MR325420,MR398868} for definitions and related properties of coherent configurations. 

	A symmetric association scheme is called \emph{primitive} if the scheme graph of every non-trivial relation $R_i$ $(i > 0)$ is a connected graph.

	Let $\mathfrak X=(X,\{R_i\}_{0\leq i\leq d})$ be a symmetric association scheme. 
	Then it is automatically a commutative association scheme. 
	The relation algebra $\mathfrak{A} = \lrangle{A_0, A_1, \ldots, A_d}$ is commutative, hence it has another basis called primitive idempotents.  
	We denote by $E_{\mathtt i}$ the $i$-th primitive idempotent of the association scheme. 
	The `transition matrices' between the two basis of the Bose-Mesner algebra $\mathfrak{A} = \lrangle{A_0, A_1, \ldots, A_d} = \lrangle{E_{\mathtt 0}, E_{\mathtt 1}, \ldots, E_{\mathtt d}}$ are denoted by $P$ the first eigenmatrix and $Q$ the second eigenmatrix. 
	Formally $A_i = \sum_{\mathtt {j=0}}^{\mathtt d} P_{{\mathtt j}i} E_{\mathtt j}$ and $E_{\mathtt i} = \frac{1}{\abs{X}} \sum_{j=0}^d Q_{j \mathtt i} A_j$. 
	The intersection numbers $p_{ij}^k$ are given by $A_i A_j = \sum_{k=0}^d p_{ij}^k A_k$ and the Krein parameters $q_{ij}^k$ are given by $E_{\mathtt i} \circ E_{\mathtt j} = \frac{1}{\abs{X}} \sum_{\mathtt{k=0}}^{\mathtt d} q_{\mathtt{ij}}^{\mathtt k} E_{\mathtt k}.$ 
	Here $M \circ N$ represents the entrywise product, also known as the Hadamard product, of the matrices $M$ and $N$.
	Let $k_i = p_{ii}^0$ be the valencies of $\mathfrak X$ and let $m_{\mathtt i} = q_{\mathtt{ii}}^{\mathtt 0}$ be the multiplicities of $\mathfrak X$.

	For each primitive idempotent $E_{\mathtt i}$, the mapping $X \rightarrow \mathbb R^{m_{\mathtt i}}$ defined by
	\[x\rightarrow \overline{x}=\sqrt{\frac{|X|}{m_{\mathtt i}}}E_{\mathtt i}\phi_{x},\]
	gives a spherical representation of $\mathfrak{X}$ on the unit sphere $S^{m_{\mathtt i}-1}\subset \mathbb R^{m_{\mathtt i}}$, where $\phi_{x}$ is the characteristic vector of $x$ (regarded as a column vector). 
	The Gram matrix of this representation is exactly $\Gram = \frac{\abs{X}}{m_{\mathtt i}} E_{\mathtt i}$.
	In what follows, whenever the representation is faithful, we identify $\overline{X}$ and $X$ and call it a \emph{spherical embedding}. 

	For more information on association schemes, the reader may check \cite{MR882540} and \cite{MR1002568}.

\subsection{Partially metric (cometric) association schemes}

	A symmetric association scheme $\mathfrak X=(X,\{R_i\}_{0\leq i\leq d})$ is called \emph{$t$-partially metric} (with respect to a connected relation $R$) if, under a certain ordering of the relations, $A_i$ is a polynomial of degree $i$ in $A_1$ for $i=1,2,\ldots, t \leq d$, where $A$ is the relation matrix of $R_1 = R$. 
	In other words, the distance-$i$ graph of the scheme graph $R$ is exactly the scheme graph of $R_i$ for $i \leq t$. 
	A $d$-partially metric association scheme is called \emph{metric} or $P$-polynomial. 
	It is folklore that the scheme graph of $R_1$ in a metric association scheme is a distance-regular graph and vice versa. 
	If $\mathfrak X=(X,\{R_i\}_{0\leq i\leq d})$ is $t$-partially metric with respect to a connected relation $R$, then for $0 \leq i \leq t$ the numbers $a_i(x,y)$, $b_i(x,y)$ and $c_i(x,y)$ where $y \in R_i(x)$ are independent of the choice of $x$ and $y$. 
	Therefore we can use the notations $a_i$, $b_i$ and $c_i$ ($0 \leq i \leq t$) unambiguously. 
	Since every association scheme is trivially $1$-partially metric, we reserve partially metric association schemes for association schemes which are at least $2$-partially metric. 

	Similarly one can define ($t$-partially) cometric association schemes by replacing $A_i$ with $E_{\mathtt i}$ and usual matrix product with Hadamard product. 
	The cometric (also known as $Q$-polynomial) association schemes have similar properties as metric association schemes, but yet the combinatorial interpretation of cometric association scheme has not been revealed. 
	When talking about $Q$-polynomial association schemes, we always take a $Q$-polynomial ordering (may not be unique) of the primitive idempotents unless stated otherwise. 

	The representation of a Q-polynomial association scheme with respect to $E_{\mathtt 1}$ is always faithful, since the inner products between points of $X$, given by $\alpha_i = Q_1(i)/m_{\mathtt 1} = P_i(1)/k_i$, are different. 
	Among them there is a unique non-trivial relation whose inner product achieves maximum $\alpha_{\max} = \max \set{ \alpha_i : \alpha_i < 1}$. 
	We call it the \emph{nearest neighbourhood relation} and its scheme graph the \emph{nearest neighbourhood graph}.

\section{Main result} \label{sec:Cross-stitch}

Our main theorem is stated as follows.

\begin{theorem} \label{thm:Comprehension}
	Let $\mathfrak{X}$ be a Q-polynomial (symmetric) association scheme with $m_{\mathtt 1} = 4$. 
	Let $R$ be the nearest neighbourhood relation and $\Gamma$ be its scheme graph. 
	Suppose $\mathfrak{X}$ is partially metric with respect to $R$, then one of the following is true:
	\begin{enumerate}
		\item $\Gamma$ is the complete bipartite graph $K_{3,3}$ and $\mathfrak{X}$ is AS06[3];
		\item $\Gamma$ is the $16$-cell graph $K_{2,2,2,2}$ and $\mathfrak{X}$ is AS08[2]; 
		\item $\Gamma$ is the Rook graph $K_3 \square K_3 \cong L(K_{3,3})$ and $\mathfrak{X}$ is AS09[3];
		\item $\Gamma$ is the Johnson graph $J(5,2)$ and $\mathfrak{X}$ is AS10[3];
		\item $\Gamma$ is the crown graph $\overline{K_2 \square K_5}$ and $\mathfrak{X}$ is AS10[6];
		\item $\Gamma$ is the tesseract graph $Q_4$ and $\mathfrak{X}$ is AS16[30].
	\end{enumerate}
\end{theorem}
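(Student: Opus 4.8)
\section*{Proof proposal}

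The plan is to exploit the spherical embedding of $\mathfrak{X}$ into $S^3 \subset \mathbb{R}^4$ afforded by the primitive idempotent $E_{\mathtt 1}$, and to combine the combinatorial rigidity coming from the partially metric hypothesis with the geometric rigidity of point configurations on the $3$-sphere. First I would set up coordinates: since $\mathfrak{X}$ is Q-polynomial with $m_{\mathtt 1}=4$, the map $x\mapsto\overline{x}$ embeds $X$ faithfully into $S^3$, with inner products $\alpha_i=P_i(1)/k_i$ taking $d+1$ distinct values, and $\alpha_{\max}$ realized exactly on the nearest neighbourhood relation $R=R_1$. Being partially metric with respect to $R$ means $A_2$ is a quadratic polynomial in $A_1$, so the scheme graph $\Gamma$ has the property that its distance-$2$ graph is again a relation graph; in particular $\Gamma$ is a connected graph on $n=\abs{X}$ vertices whose local structure is highly constrained.

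The key steps, in order: (1) Record the feasibility conditions. From $E_{\mathtt 1}$ being a rank-$4$ idempotent we get $\sum_i k_i\alpha_i = 0$ and $\sum_i k_i\alpha_i^2 = n/4$ (trace and $E_{\mathtt 1}^2=E_{\mathtt 1}$ conditions); combined with $\sum_i k_i = n$ and the nonnegativity of the Krein parameters $q_{\mathtt{11}}^{\mathtt k}$, this already bounds the valency $k = k_1$ of $\Gamma$. Since the Q-polynomial ordering forces $q_{\mathtt{11}}^{\mathtt k}=0$ for $k>2$ only in the metric-dual sense, I would instead use the three-term recurrence for the dual eigenvalue sequence to pin down $\alpha_1$ and the degree $d\le 3$ or $4$. (2) Translate partial metricity into local combinatorics: the parameters $a_1,b_1,c_1$ and $a_2,b_2,c_2$ are well-defined, and $\Gamma$ together with its distance-$2$ graph forms an association scheme of small degree, so one can invoke the classification-style arguments (as in \cite{MR2212140,bannai2017spherical}) that the scheme graph is one of a short list of strongly regular or amply regular graphs. (3) For each surviving candidate graph $\Gamma$, check whether it admits a spherical embedding in $S^3$ consistent with some Q-polynomial association scheme; this is where the $4$-dimensional Euclidean geometry enters — the vertex images must lie on $S^3$ and the relation partition must be compatible with mutual distances, which severely limits configurations and connects to regular polytopes (the $16$-cell, the tesseract) and Johnson/Hamming-type schemes. (4) Finally, for each graph that passes, identify the unique association scheme structure and match it to the Hanaki–Miyamoto catalogue entries AS06[3], AS08[2], AS09[3], AS10[3], AS10[6], AS16[30].

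I expect the main obstacle to be step (2)–(3): showing that no \emph{infinite} family or sporadic large example slips through. The Bannai–Ito-type finiteness is guaranteed abstractly by \cite{MR3281131,MR2494443}, but making it effective with $m_{\mathtt 1}=4$ requires carefully bounding the number of vertices $n$ and the valency $k$ using the idempotent conditions together with integrality of the intersection numbers $p_{ij}^h$ and the Krein conditions; the delicate case is when $\Gamma$ is strongly regular (degree $d=2$ as a scheme but $\Gamma$ itself might be viewed with more relations), where one must rule out, e.g., large conference-graph-like parameters by a rank/eigenvalue argument in $\mathbb{R}^4$. A secondary technical point is handling the possibility that the Q-polynomial ordering is not unique, so that several choices of $E_{\mathtt 1}$ or of the nearest neighbourhood relation must be reconciled; I would dispatch this by showing the geometric embedding is essentially canonical once $m_{\mathtt 1}=4$ is fixed. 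Once $n$ is bounded (one expects $n\le 32$ from the arguments), the remainder is a finite check against known small schemes.
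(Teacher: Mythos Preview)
Your outline has the right general shape---embed in $S^3$, exploit partial metricity, finish with a finite check---but it is missing the concrete lemma that actually makes the problem finite, and as written steps (1)--(2) do not deliver the bounds you claim.

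The gap is in how you bound the valency $k_1$ and the degree $d$. The trace identities $\sum_i k_i\alpha_i=0$ and $\sum_i k_i\alpha_i^2=n/4$ involve \emph{all} relations at once and do not by themselves bound $k_1$; nor does the Q-polynomial three-term recurrence force $d\le 3$ or $4$ (a priori $d$ could be much larger). The paper's key move, which your proposal does not contain, is to look one dimension down: for a fixed vertex $x_0$, the neighbours $R_1(x_0)$ lie on an affine hyperplane, hence on a translated $2$-sphere $S^2\subset\mathbb{R}^3$. Partial metricity says that among these neighbours only the relations $R_0,R_1,R_2$ occur, so $R_1(x_0)$ is a spherical $2$-code on $S^2$, whence $\abs{R_1(x_0)}\le 9$ by the Delsarte--Goethals--Seidel bound. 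Moreover the induced subgraph $\Gamma(x_0)$ is regular, and a Gram-matrix rank check (rank $\le 3$) against the finite list of regular graphs on at most $9$ vertices leaves only six possible local graphs: $N_3$, $N_4$, $K_3$, $K_4$, $2K_2$, $C_4$, $C_5$, $K_3\mathbin{\square}K_2$, and the octahedron (some of which collapse geometrically). This is the lemma that replaces your hand-waved ``classification-style arguments'' in step~(2).

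With the local graph pinned down, the proof bifurcates in a way your proposal also does not anticipate. For several local types ($K_4$, $C_4$, $C_5$, octahedron, $K_3\mathbin{\square}K_2$) one invokes \emph{local characterization} theorems (locally octahedron $\Rightarrow$ $16$-cell; locally $C_5$ $\Rightarrow$ icosahedron; locally $K_3\mathbin{\square}K_2$ $\Rightarrow$ $J(5,2)$, etc.), and some of these are discarded because they have $m_{\mathtt 1}=3$ rather than $4$. For the triangle-poor local types ($N_3$, $N_4$, $2K_2$) no such global characterization exists, and the paper instead runs an exhaustive search over feasible relation-distribution diagrams of $R_1$, using the cosine recurrences, Yamazaki's lemma, the light-tail condition (for $N_4$ one gets $q_{\mathtt{11}}^{\mathtt 1}=0$), and the splitting-field bound $d\le 4k_1+1$ to terminate. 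Your plan to ``bound $n\le 32$ and check'' is in spirit similar to this last step, but you would need the valency bound first to make the search finite, and you have not supplied it.
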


\begin{remark}
	Here the notation AS06[3] represents the 3rd association scheme of size 6 in Hanaki and Miyamoto's list of association schemes \cite{MR1972022}. 
	One can check all the Q-polynomial symmetric association schemes with $m_{\mathtt 1} = 4$ up to $30$ vertices in their list. 
	Only two of them are not listed in \cref{thm:Comprehension}. 
	They are AS05[1], where $\Gamma$ is the complete graph $K_5$, and AS24[43], where $\Gamma$ is the $1$-skeleton of the $24$-cell. 
\end{remark}

	\begin{table}[htbp]
		\begin{tabular}{|c|c|c|c|c|c|c|}
			\hline
			No. & $\abs{X}$ & $d$ & $k_1$ & Cosines & $\Gamma$ & $\Gamma(x)$\\ \hline
			1 & 5 & 1 & 4 & $(1,-\frac{1}{4})$ & $K_5$ & Complete graph $K_4$ \\
			2 & 6 & 2 & 3 & $(1,0,-\frac{1}{2})$ & $K_{3,3} $& Empty graph $N_3$ \\
			3 & 8 & 2 & 6 & $(1,0,-1)$ & $K_{2,2,2,2}$ & Octahedron graph \\
			4 & 9 & 2 & 4 & $(1,\frac{1}{4},-\frac{1}{2})$ & $K_3 \square K_3$ & perfect matching $2K_2$ \\
			5 & 10 & 2 & 6 & $(1,\frac{1}{6},-\frac{2}{3})$ & Johnson graph $J(5,2)$ & Rook graph $K_3 \square K_2$ \\
			6 & 10 & 3 & 4 & $(1,\frac{1}{4},-\frac{1}{4},-1)$ & Crown graph $\overline{K_2 \square K_5}$ & Empty graph $N_4$ \\
			7 & 16 & 4 & 4 & $(1,\frac{1}{2},0,-\frac{1}{2},-1)$ & Tesseract graph $Q_4$ & Empty graph $N_4$ \\
			8 & 24 & 4 & 8 & $(1,\frac{1}{2},0,-\frac{1}{2},-1)$ & 24-cell graph & Cube graph $H(3,2)$ \\ \hline
		\end{tabular}
		\caption{Q-polynomial association schemes with $m_{\mathtt 1} = 4$ up to $30$ vertices}
	\end{table}

\section{Tools} \label{sec:derailment}
	
\subsection{Spherical codes}

	A spherical code (on $S^{d-1}$) is nothing but a finite set $X$ on $S^{d-1} \subset \mathbb{R}^d$. 
	We call $X$ a spherical $A$-code if $X$ is a spherical code such that all inner products between distinct points of $X$ belong to $A$, formally $\set{ \lrangle{x,y}: x,y \in X, x \neq y} \subseteq A$. 
	Further we call $X$ a spherical $s$-code if $X$ is a spherical $A$-code for some $\abs{A} \leq s$. 
	In particular, the kissing number problem in $\mathbb{R}^d$ is equivalent to find the maximum cardinality of spherical $[-1,\frac{1}{2}]$-codes on $S^{d-1}$.
	
	\begin{theorem}[{\cite[Theorem 4.8]{MR0485471}}] \label{thm:progenity}
		Suppose $X$ is an $s$-code on $S^{d-1} \subset \mathbb{R}^d$, then $\abs{X} \leq \binom{d+s-1}{d-1} + \binom{d+s-2}{d-1}$.
	\end{theorem}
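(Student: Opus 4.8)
The plan is to use the standard polynomial (linear‑programming) method: produce, from the code $X$, a family of $|X|$ linearly independent functions living in a space of explicitly known dimension.

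First I would fix the set of inner products. Let $A = \set{\lrangle{x,y} : x,y \in X,\ x \neq y}$, so $\abs{A} = s' \leq s$, and note that the claimed bound $\binom{d+s-1}{d-1} + \binom{d+s-2}{d-1}$ is nondecreasing in $s$; hence it suffices to bound $\abs X$ by the corresponding expression with $s'$ in place of $s$, and so I may as well assume $\abs A = s$. Next I would introduce the \emph{annihilator polynomial}
\[
	F(t) = \prod_{\alpha \in A} \frac{t - \alpha}{1 - \alpha},
\]
a univariate polynomial of degree $s$ with $F(1) = 1$ and $F(\alpha) = 0$ for every $\alpha \in A$. For each $x \in X$ define $f_x \colon S^{d-1} \to \mathbb{R}$ by $f_x(\xi) = F(\lrangle{x,\xi})$. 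Since $\lrangle{x,\xi}$ is a linear form in the coordinates of $\xi$, each $f_x$ is the restriction to $S^{d-1}$ of a polynomial of degree at most $s$ in $d$ variables; thus all the $f_x$ lie in the space $\mathrm{Pol}_s(S^{d-1})$ of such restricted polynomials.

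The crucial point is the ``delta'' property: for $x,y \in X$ we have $f_x(y) = F(\lrangle{x,y}) = \delta_{x,y}$, because $\lrangle{x,y} \in A$ kills $F$ when $x \neq y$ while $\lrangle{x,x} = 1$ gives $F(1) = 1$. From this, linear independence of $\set{f_x : x \in X}$ is immediate: if $\sum_{x \in X} c_x f_x \equiv 0$ on $S^{d-1}$, then evaluating at any $y \in X$ yields $c_y = 0$. Therefore $\abs X \leq \dim \mathrm{Pol}_s(S^{d-1})$.

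It remains to identify $\dim \mathrm{Pol}_s(S^{d-1})$. Here one uses the decomposition $\mathrm{Pol}_s(S^{d-1}) = \bigoplus_{k=0}^{s} \mathrm{Harm}_k(S^{d-1})$ into spaces of (restrictions of) harmonic homogeneous polynomials, with $\dim \mathrm{Harm}_k(S^{d-1}) = \binom{d+k-1}{k} - \binom{d+k-3}{k-2}$; the sum telescopes to $\binom{d+s-1}{d-1} + \binom{d+s-2}{d-1}$, which is exactly the claimed bound. The only genuinely delicate step is this last dimension count — in particular the reduction of polynomials modulo the sphere equation $\lrangle{\xi,\xi} = 1$, which is what collapses the naive bound $\binom{d+s}{s}$ down to the stated value; everything else is a short formal argument. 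Alternatively, to avoid invoking harmonic decomposition, I could note directly that modulo $\lrangle{\xi,\xi}-1$ every monomial of degree $\leq s$ is congruent to one in which at most one fixed variable, say $\xi_d$, appears to a power at most $1$ beyond being ``purely harmonic,'' and count those representatives; but the spherical‑harmonics bookkeeping is cleanest.
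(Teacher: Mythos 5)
The paper does not prove this statement itself --- it is quoted directly from Delsarte--Goethals--Seidel \cite[Theorem 4.8]{MR0485471} --- and your argument is precisely the proof given there: the annihilator polynomial $F$, the functions $f_x(\xi)=F(\lrangle{x,\xi})$ with the delta property forcing linear independence inside $\mathrm{Pol}_s(S^{d-1})$, and the harmonic decomposition giving $\dim \mathrm{Pol}_s(S^{d-1})=\sum_{k=0}^{s}\bigl(\binom{d+k-1}{d-1}-\binom{d+k-3}{d-1}\bigr)=\binom{d+s-1}{d-1}+\binom{d+s-2}{d-1}$. The details check out (in particular the telescoping of the dimension sum, and the implicit fact that $1\notin A$ on the unit sphere so $F$ is well defined), so your proposal is correct and matches the source's approach.
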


	We mainly need the bound for $d=3$ and $s=2$. 
	Note that the $1$-codes are exactly (vertices of) regular simplexes.

	\begin{example}
		Suppose $X$ is a $2$-code on $S^2 \subset \mathbb{R}^3$, then $\abs{X} \leq 9$. 
	\end{example}

	\begin{theorem}[{\cite{MR2415397}}] \label{thm:chorea}
		The kissing number $K(4)$ in $4$-dimensional Euclidean space is $24$. 
		In other words, suppose $X$ is a spherical $[-1,\frac{1}{2}]$-code on $S^3 \subset \mathbb{R}^4$, then $\abs{X} \leq 24$.
	\end{theorem}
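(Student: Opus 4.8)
The plan is to obtain the upper bound from the Delsarte--Goethals--Seidel linear programming method in the strengthened form due to Musin, and to record that the lower bound $K(4) \ge 24$ is witnessed by the $D_4$ root system (equivalently, the vertices of the $24$-cell): its $24$ points on $S^3$ have pairwise inner products in $\set{-1,-\tfrac12,0,\tfrac12} \subseteq [-1,\tfrac12]$, so it is a spherical $[-1,\tfrac12]$-code of size $24$.

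For the upper bound, let $G_k = G_k^{(4)}$ denote the ultraspherical (Gegenbauer) polynomials for dimension $4$, normalized so that $G_k(1) = 1$. By Schoenberg's theorem they are positive definite on $S^3$: for every finite $X \subseteq S^3$ one has $\sum_{x,y \in X} G_k(\langle x,y\rangle) \ge 0$. Suppose $f(t) = \sum_{k \ge 0} c_k G_k(t)$ has $c_0 = 1$ and $c_k \ge 0$ for all $k$. For a spherical $[-1,\tfrac12]$-code $X$, expanding $\sum_{x,y \in X} f(\langle x,y\rangle)$ in the basis $\{G_k\}$ gives $\abs{X}^2 \le \sum_{x \in X}\bigl(f(1) + \sum_{y \ne x} f(\langle x,y\rangle)\bigr)$. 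The classical argument (as in \cite{MR0485471}) then takes $f \le 0$ on all of $[-1,\tfrac12]$, so that each inner sum is nonpositive and $\abs{X} \le f(1)$; but in dimension $4$ the best bound obtainable this way is only about $25.5$, so a genuinely new ingredient is needed.

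Musin's refinement is to impose $f(t) \le 0$ only on the shorter interval $[-\tfrac12,\tfrac12]$, allowing $f$ to become positive near $t = -1$. Then, for a fixed point $x \in X$, the only terms $f(\langle x,y\rangle)$ that can be positive come from points $y$ with $\langle x,y\rangle < -\tfrac12$, i.e.\ points lying in the open spherical cap of angular radius $\tfrac{\pi}{3}$ about the antipode $-x$. Since any such collection of points is also part of the kissing configuration and hence has pairwise angular distance $\ge \tfrac{\pi}{3}$, only boundedly many of them can occur; consequently $f(1) + \sum_{y \ne x} f(\langle x,y\rangle)$ is maximized over a compact, finite-dimensional family of admissible local configurations. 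The heart of the argument is the lemma that, for a suitably chosen $f$, this maximum equals $f(1)$. Feeding in an explicit degree-$9$ polynomial $f$ with nonnegative Gegenbauer coefficients and $f(1) = 24$ (one is exhibited in \cite{MR2415397}) then yields $\abs{X}^2 \le 24\abs{X}$, i.e.\ $\abs{X} \le 24$.

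The main obstacle is exactly this local optimization lemma: one must classify, up to isometry of $S^3$, the possible configurations of contact points inside a cap around $-x$ together with their neighbours, and verify $\sum_{y \ne x} f(\langle x,y\rangle) \le 0$ for each; this step rests on delicate spherical-trigonometric estimates and is where Musin's proof becomes computer-assisted. Designing the extremal polynomial $f$ --- balancing its degree, the sign condition on $[-\tfrac12,\tfrac12]$, nonnegativity of the coefficients $c_k$, and the target value $f(1) = 24$ --- is a second, more routine but still nontrivial, optimization. Since we only use \cref{thm:chorea} as a black box, we import it from \cite{MR2415397} rather than reproducing these computations.
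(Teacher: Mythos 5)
The paper gives no proof of this result; it is imported directly from Musin \cite{MR2415397}, and your proposal ultimately does the same, so the two treatments agree. Your accompanying outline of Musin's refinement of the Delsarte--Goethals--Seidel linear programming bound is essentially faithful (modulo small parameter inaccuracies: Musin imposes the sign condition on an interval $[-t_0,\tfrac12]$ with $t_0\approx 0.6058$ rather than on $[-\tfrac12,\tfrac12]$, and the final bound comes from showing all the local maxima lie below $25$ rather than from having $f(1)=24$ exactly), but since both you and the paper use the theorem as a black box, nothing further is required.
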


	The following proposition follows from basic linear algebra.

	\begin{proposition} \label{prop:noncreative}
		Let $u_1, u_2, \ldots, u_n$ be $n$ vectors lie in the $\mathbb{R}^d$, and let $\Gram$ be the Gram matrix of these vectors. 
		Then the rank of $\Gram$ is at most $d$. 
		Consequently $0$ is an eigenvalue of $\Gram$ of multiplicity at least $n-d$. 
		Suppose the characteristic polynomial $\chi_{\Gram}(t) = \det (tI - \Gram)$ of $\Gram$ is equal to $h_0 + h_1 t + \cdots + t^n$, then $h_i = 0$ for $i= 0, 1, \ldots, n-d-1$.
	\end{proposition}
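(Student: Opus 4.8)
The plan is to establish \cref{prop:noncreative} by reducing everything to the standard fact that the Gram matrix of vectors in $\mathbb{R}^d$ is positive semidefinite of rank equal to the dimension of their span. First I would recall that if $u_1, \ldots, u_n \in \mathbb{R}^d$ and $U$ is the $d \times n$ matrix whose columns are the $u_i$, then $\Gram = U^T U$. From this factorization, $\rank \Gram = \rank U \leq d$, since $U$ has only $d$ rows. This gives the first assertion immediately.

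Next, for the statement about eigenvalue multiplicity: since $\Gram$ is a real symmetric $n \times n$ matrix, it is diagonalizable with $n$ real eigenvalues counted with multiplicity, and the algebraic multiplicity of the eigenvalue $0$ equals $n - \rank \Gram \geq n - d$. This is where one uses that for symmetric (more generally diagonalizable) matrices the geometric and algebraic multiplicities of every eigenvalue coincide, so the nullity $n - \rank \Gram$ equals the algebraic multiplicity of $0$.

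Finally, for the coefficient statement: write $\chi_{\Gram}(t) = \prod_{j=1}^n (t - \lambda_j)$ where the $\lambda_j$ are the eigenvalues with multiplicity. Since $0$ occurs among the $\lambda_j$ with multiplicity at least $n - d$, the polynomial $\chi_{\Gram}(t)$ is divisible by $t^{n-d}$; hence when expanded as $h_0 + h_1 t + \cdots + t^n$, the coefficients $h_0 = h_1 = \cdots = h_{n-d-1} = 0$. Equivalently, $h_i$ is (up to sign) the $i$-th elementary symmetric function of the $\lambda_j$, which vanishes for $i \leq n - d - 1$ because more than $i$ of the $\lambda_j$ are zero.

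There is no real obstacle here; the statement is genuinely a routine consequence of the spectral theorem and the factorization $\Gram = U^T U$. The only point requiring a modicum of care is making sure the passage from $\rank \Gram \leq d$ to the vanishing of the low-order coefficients of $\chi_{\Gram}$ is phrased correctly, i.e. invoking diagonalizability of the symmetric matrix $\Gram$ so that the nullity and the order of vanishing of $\chi_{\Gram}$ at $t=0$ agree.
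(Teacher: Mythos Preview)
Your argument is correct and is precisely the basic linear algebra the paper has in mind; the paper itself offers no proof beyond the remark that the proposition ``follows from basic linear algebra.'' One small slip in your closing parenthetical: $h_i$ is, up to sign, the $(n-i)$-th elementary symmetric function of the eigenvalues, not the $i$-th, and it vanishes for $i \le n-d-1$ because any product of $n-i > d$ eigenvalues must contain at least one zero factor---but this is immaterial, since your divisibility argument $t^{n-d} \mid \chi_{\Gram}(t)$ already finishes the proof cleanly.
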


\subsection{Cosine sequence}
	The first eigenmatrix $P$ and the second eigenmatrix $Q$ of an association scheme is related by $M^{-1}Q^T = \cplxconj{P} K^{-1}$, where $K = \diag(k_0, k_1, \ldots, k_d)$ and $M = \diag(m_{\mathtt 0}, m_{\mathtt 1}, \ldots, m_{\mathtt d})$.
	We follow Terwilliger by using the \emph{cosine matrix} $C=M^{-1}Q^T = \cplxconj{P} K^{-1}$ to complete our computation. 
	The $(i,\mathtt{j})$-entry $\omega_{i,\mathtt{j}}$ of the cosine matrix $C$ is equal to the cosine of the angle between two points of relation $R_i$ in the spherical representation with respect to $E_{\mathtt j}$. 
	We call the numbers $\omega_{i} = \omega_{i,\mathtt{j}}$ the \emph{cosine sequence} corresponding to $E_{\mathtt j}$.
	The indices of the relations and those of primitive idempotents are sometimes confusing. 
	Note that we use alphabets in normal math font ($i,j,k, \ldots$) for indices of the relations and distinguish the indices of the primitive idempotents by using alphabets in typewriter font ($\mathtt{i,j,k, \ldots}$). 
	The identities $A_i A_j = \sum_{h=0}^d p_{ij}^h A_h$ and $E_{\mathtt{i}} \circ E_{\mathtt{j}} = \frac{1}{\abs{X}} \sum_{\mathtt{h=0}}^\mathtt{d} q_{\mathtt{ij}}^{\mathtt{h}} E_{\mathtt{h}}$ can be written as $k_i \omega_{i,-} \omega_{j,-} = \sum_{h=0}^d p_{hi}^j \omega_{h,-}$ and $m_{\mathtt{i}} \omega_{-,\mathtt{i}} \omega_{-,\mathtt{j}} = \sum_{\mathtt{h=0}}^\mathtt{d} q_{\mathtt{hi}}^{\mathtt{j}} \omega_{-,\mathtt{h}}$ respectively.
\subsection{Splitting field} 
	
	Given an association scheme $\mathfrak{X}$, its \emph{splitting field} $K$ is the smallest field containing all the eigenvalues of $\mathfrak{X}$. 
	It is conjectured by Bannai and Ito \cite[p.123]{MR882540} that the splitting field of a commutative association scheme is contained in a cyclotomic field. 
	For $Q$-polynomial association schemes, there are stronger restrictions on the splitting field. 
	\begin{theorem} [{\cite[Theorem 2.2]{MR2494443}}] \label{thm:massageuse}
		The splitting field of any $Q$-polynomial association scheme with $m_{\mathtt 1} > 2$ is at most a degree two extension of the rationals.
	\end{theorem}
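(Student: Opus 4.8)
The plan is to pass to the splitting field $K$ of $\mathfrak X$ and exploit the action of $\Gal(K/\mathbb Q)$ on the primitive idempotents. First, $K/\mathbb Q$ is a finite Galois extension: each relation matrix $A_j$ is integral, so its eigenvalues form a complete set of Galois conjugates, and $K$ — the compositum over $j$ of the splitting fields of the $A_j$ — is normal over $\mathbb Q$. Write $G=\Gal(K/\mathbb Q)$. Applying $\sigma\in G$ entrywise to $E_{\mathtt i}=\frac1{\abs{X}}\sum_j Q_{j\mathtt i}A_j$ fixes every $A_j$ and sends $E_{\mathtt i}$ to a symmetric idempotent of the Bose--Mesner algebra whose range is, for each $j$, an $A_j$-eigenspace with eigenvalue $\sigma(P_{\mathtt i j})$; hence $\sigma$ permutes $E_{\mathtt 0},\dots,E_{\mathtt d}$, fixes $E_{\mathtt 0}=\frac1{\abs{X}}J$, fixes the (rational, indeed integral) multiplicities so that $m_{\sigma(\mathtt i)}=m_{\mathtt i}$, and permutes the Krein parameters compatibly, $q^{\sigma(\mathtt k)}_{\sigma(\mathtt i)\sigma(\mathtt j)}=\sigma(q^{\mathtt k}_{\mathtt i\mathtt j})$, so in particular preserving which of them vanish. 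Since a $Q$-polynomial ordering is exactly one whose Krein pattern is of path type ($q^{\mathtt k}_{\mathtt 1\mathtt j}=0$ for $\abs{k-j}\ge2$ and $q^{\mathtt k}_{\mathtt 1\mathtt j}\ne0$ for $\abs{k-j}=1$), $\sigma$ carries $Q$-polynomial orderings to $Q$-polynomial orderings; thus $\sigma(E_{\mathtt 1})$ is the ``position-$1$'' idempotent of some $Q$-polynomial ordering of $\mathfrak X$ and has multiplicity $m_{\mathtt 1}$.

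The first substantive step is to show that $K$ is generated over $\mathbb Q$ by the single dual cosine sequence $\theta^*_i:=Q_{i\mathtt 1}=m_{\mathtt 1}\omega_{i,\mathtt 1}$ $(0\le i\le d)$ attached to $E_{\mathtt 1}$; these are pairwise distinct because the representation attached to $E_{\mathtt 1}$ is faithful. The $Q$-polynomial property furnishes polynomials $v^*_j$ of degree $j$ with $Q_{i\mathtt j}=v^*_j(\theta^*_i)$ for all $i,\mathtt j$, and the relations $\sum_i k_i\,Q_{i\mathtt j}Q_{i\mathtt{j'}}=\abs{X}\,m_{\mathtt j}\,\delta_{j j'}$ (read off from $\operatorname{tr}(E_{\mathtt j}E_{\mathtt{j'}})=\delta_{j j'}m_{\mathtt j}$) identify the $v^*_j$ as the orthogonal polynomials for the discrete weight $\sum_i k_i\delta_{\theta^*_i}$, normalised by $v^*_j(\theta^*_0)=m_{\mathtt j}$. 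As the $k_i$ and $m_{\mathtt j}$ are rational, Gram--Schmidt — which involves only the moments $\sum_i k_i(\theta^*_i)^r$ — produces $v^*_j\in\mathbb Q(\theta^*_0,\dots,\theta^*_d)[\lambda]$, so every second-eigenmatrix entry $Q_{i\mathtt j}=v^*_j(\theta^*_i)$, hence every eigenvalue of $\mathfrak X$, lies in $\mathbb Q(\theta^*_0,\dots,\theta^*_d)$; that is, $K=\mathbb Q(\theta^*_0,\dots,\theta^*_d)$. Now any $\tau\in G$ fixing $E_{\mathtt 1}$ fixes each $\theta^*_i$ (compare the $A_j$-expansions of $E_{\mathtt 1}$ and $\tau(E_{\mathtt 1})$), hence fixes $K$ pointwise and is trivial. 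Therefore $[K:\mathbb Q]=\abs{G}=\bigl\lvert\{\sigma(E_{\mathtt 1}):\sigma\in G\}\bigr\rvert$, and the theorem reduces to the combinatorial claim that the Galois orbit of $E_{\mathtt 1}$ has at most two elements.

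Bounding this orbit is the heart of the matter, and the point where $m_{\mathtt 1}>2$ is indispensable. For $m_{\mathtt 1}=2$ the polygon schemes have splitting field $\mathbb Q(\zeta_n+\zeta_n^{-1})$ of unbounded degree $\varphi(n)/2$, the Galois group sweeping $E_{\mathtt 1}$ through an arbitrarily large family of position-$1$ idempotents of distinct $Q$-polynomial orderings; so the hypothesis genuinely must be used. For $m_{\mathtt 1}\ge3$ and diameter $d\ge3$ I would invoke the structure theory of association schemes carrying more than one $Q$-polynomial ordering — the set of position-$1$ idempotents of $Q$-polynomial orderings is known to be very small, at most two outside a finite explicit list of parameter arrays — together with the extra constraints that the members of the orbit share their parameter array up to conjugacy and all have multiplicity $m_{\mathtt 1}\ge3$, to conclude that the orbit has size at most two; the finitely many exceptional arrays are then checked directly. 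The low-diameter cases are immediate: for $d=1$ the scheme is a complete graph with rational spectrum, and for $d=2$ a strongly regular graph has its two nontrivial eigenvalues either both rational or a conjugate pair of quadratic irrationals. This orbit bound is where real information about $Q$-polynomial schemes must be imported, and I expect it to be the genuine obstacle.

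An essentially equivalent route to the last step, bypassing the enumeration of $Q$-polynomial orderings, runs through Leonard's theorem: for $d\ge3$ the dual eigenvalues take the Askey--Wilson shape $\theta^*_i=\theta^*_0+h^*(1-q^i)(1-s^*q^{i+1})q^{-i}$ and its degenerations, and requiring $\mathbb Q(\theta^*_0,\dots,\theta^*_d)$ to have bounded degree forces the base $q$ to be rational (or the classical limit $q=1$), after which the single remaining free parameter contributes at most one quadratic irrationality — the excluded case of $q$ a nontrivial root of unity being exactly the one that $m_{\mathtt 1}>2$ removes. Either way, combined with the reduction of the second paragraph, one obtains $[K:\mathbb Q]\le2$.
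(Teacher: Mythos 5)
Your proposal is correct and follows essentially the same route as the paper, which derives this splitting-field bound from the Galois action on the primitive idempotents together with the fact (\cref{thm:pumice}) that an association scheme with $m_{\mathtt 1}>2$ admits at most two $Q$-polynomial orderings; your added details (that $K$ is generated by the dual eigenvalues of $E_{\mathtt 1}$, so the stabiliser of $E_{\mathtt 1}$ in $\Gal(K/\mathbb Q)$ is trivial and $[K:\mathbb Q]$ equals the orbit size) are exactly the right way to make that derivation precise. The only soft spot is your hedged phrasing of the orbit bound ("at most two outside a finite explicit list"); the result you need is the clean statement that there are at most two $Q$-polynomial structures when $m_{\mathtt 1}>2$, which is precisely what the cited characterisation supplies.
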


	This follows from the characterization of the association schemes with multiple $Q$-polynomial structures. 
	\begin{theorem} [{\cite[Theorem 1]{MR1609873} \cite[Theorem 3]{MR3134270}}] \label{thm:pumice}
		Let $\mathfrak{X} = (X, \set{R_i}_{0 \leq i \leq d})$ be a $Q$-polynomial association scheme with respect to the ordering $E_{\mathtt 0}, E_{\mathtt 1}, \ldots, E_{\mathtt d}$ of the primitive idempotents and $m_{\mathtt 1} > 2$. 
		\begin{enumerate}
			\item Suppose $\mathfrak{X}$ is $Q$-polynomial with respect to another ordering. Then the new ordering is one of the following. 
			\begin{enumerate}
				\item $E_{\mathtt 0}, E_{\mathtt 2}, E_{\mathtt 4}, E_{\mathtt 6}, \ldots, E_{\mathtt 5}, E_{\mathtt 3}, E_{\mathtt 1}$;
				\item $E_{\mathtt 0}, E_{\mathtt d}, E_{\mathtt 1}, E_{\mathtt{d-1}}, E_{\mathtt 2}, E_{\mathtt{d-2}}, E_{\mathtt 3}, E_{\mathtt{d-3}}, \ldots$;
				\item \label{itm:Albian} $E_{\mathtt 0}, E_{\mathtt d}, E_{\mathtt 2}, E_{\mathtt{d-2}}, E_{\mathtt 4}, E_{\mathtt{d-4}}, \ldots, E_{\mathtt{d-5}}, E_{\mathtt 5}, E_{\mathtt{d-3}}, E_{\mathtt 3}, E_{\mathtt{d-1}}, E_{\mathtt 1}$;
				\item \label{itm:halter}$E_{\mathtt 0}, E_{\mathtt{d-1}}, E_{\mathtt 2}, E_{\mathtt{d-3}}, E_{\mathtt 4}, E_{\mathtt{d-5}}, \ldots, E_{\mathtt 5}, E_{\mathtt{d-4}}, E_{\mathtt 3}, E_{\mathtt{d-2}}, E_{\mathtt 1}, E_{\mathtt d}$.
			\end{enumerate}
			\item $\mathfrak{X}$ has at most two $Q$-polynomial structures.
		\end{enumerate}
	\end{theorem}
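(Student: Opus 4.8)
\emph{Strategy.} The plan is to pass to the dual (cometric) side, where a $Q$-polynomial ordering becomes a tridiagonal-matrix condition, and to run the cometric analogue of the known classification of multiple $P$-polynomial orderings of a distance-regular graph.

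\emph{Reformulation.} Being $Q$-polynomial with respect to $E_{\mathtt 0},\dots,E_{\mathtt d}$ means precisely that the matrix $B^{*}=\bigl(q_{\mathtt{1\,j}}^{\mathtt k}\bigr)_{\mathtt k,\mathtt j}$ of Hadamard multiplication by $E_{\mathtt 1}$, written in the basis of primitive idempotents, is irreducible and tridiagonal; equivalently $E_{\mathtt i}$ is a polynomial of degree exactly $i$ in $E_{\mathtt 1}$ under $\circ$. Working in the coordinates $M\mapsto(M_{R_{0}},\dots,M_{R_{d}})$, this says $Q_{k\mathtt i}=v^{*}_{i}(\theta^{*}_{k})$ for $d+1$ distinct reals $\theta^{*}_{k}:=Q_{k\mathtt 1}$ and polynomials $v^{*}_{i}$ of degree $i$; the array $(\theta^{*}_{k})$ and the dual intersection numbers $a^{*}_{i},b^{*}_{i},c^{*}_{i}$ are highly constrained (Askey--Wilson / $q$-Racah type and its degenerations), and the hypothesis $m_{\mathtt 1}>2$ rules out exactly the degenerate cyclic ($C_{n}$-type) arrays, which carry extra symmetry and can admit many $Q$-polynomial orderings. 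A second $Q$-polynomial ordering is a permutation $\sigma$ fixing $0$ with $\bigl(q^{\mathtt{\sigma(k)}}_{\mathtt{\sigma(1)}\,\mathtt{\sigma(j)}}\bigr)$ again tridiagonal; equivalently $E_{\mathtt{\sigma(1)}}$ is again a generator of the Bose--Mesner algebra under $\circ$, and since already $E_{\mathtt{\sigma(1)}}=v^{*}_{\sigma(1)}(E_{\mathtt 1})$, this asks precisely when applying $v^{*}_{\sigma(1)}$ to the node array $(\theta^{*}_{k})$ produces another admissible such array, reindexed by $\sigma$.

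\emph{Classification.} Here I would argue along the lines of the metric classification (due to Bannai--Ito, and recorded in Brouwer--Cohen--Neumaier's book): a polynomial can carry one node array of $q$-Racah type onto another one of the same size only through the finitely many affine and linear-fractional symmetries of the array --- the reversal $\theta^{*}_{k}\mapsto\theta^{*}_{d-k}$, and the ``decimation'' symmetries present exactly when the scheme is $Q$-bipartite (all $a^{*}_{i}=0$) or $Q$-antipodal. Pulling these back through $\sigma$ forces the new first idempotent to be $E_{\mathtt 2}$, $E_{\mathtt{d-1}}$ or $E_{\mathtt d}$, and then pins the whole reordering down to one of the four listed: (a) is the $Q$-bipartite reordering through $E_{\mathtt 2}$, (b) the $Q$-antipodal reordering through $E_{\mathtt d}$, and (c), (d) are the two reorderings available when the scheme is both $Q$-bipartite and $Q$-antipodal (through $E_{\mathtt d}$, resp.\ $E_{\mathtt{d-1}}$, according to the parity of $d$). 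For the last part, if $\mathfrak X$ had two inequivalent non-trivial $Q$-polynomial orderings then composing the corresponding symmetries would give a group of symmetries of $(\theta^{*}_{k})$ too large for an array of $q$-Racah type with $m_{\mathtt 1}>2$; hence at most one non-trivial reordering exists and $\mathfrak X$ has at most two $Q$-polynomial structures.

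\emph{Main obstacle.} The heart of the matter is the classification step --- proving that no other permutation $\sigma$ keeps $B^{*}$ tridiagonal after reindexing. One route makes the Askey--Wilson / $q$-Racah parametrisation of the parameter arrays explicit and then carries out a finite but delicate study of its symmetry group and of which polynomials of it are again arrays of the same kind, with the degenerate families and small diameters handled separately. The other --- closer to Suzuki's original treatment --- stays combinatorial, using in both structures that the relevant rows of Krein parameters have exactly two nonzero off-diagonal entries and leaning on the sign and positivity conditions on the $q_{\mathtt{ij}}^{\mathtt k}$; the bookkeeping is heavier here because on the cometric side there is no actual graph to exploit, and getting the duality precisely right --- including the gap repaired in the later correction --- is where the real care goes.
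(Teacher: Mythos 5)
The paper contains no proof of \cref{thm:pumice}: the statement is imported verbatim from Suzuki's work on multiple $Q$-polynomial structures together with the cited correction, so there is no internal argument to compare yours against. On its own terms, your proposal correctly identifies the standard strategy --- dualize the Bannai--Ito analysis of multiple $P$-polynomial orderings, i.e.\ determine which permutations $\sigma$ of the primitive idempotents keep the Krein matrix $\bigl(q_{\mathtt{1j}}^{\mathtt k}\bigr)$ irreducible tridiagonal after reindexing, with $m_{\mathtt 1}>2$ serving to exclude the cyclic-type dual eigenvalue arrays --- but what you have written is a statement of strategy, not a proof. The step that actually constitutes the theorem, namely showing that $\sigma(\mathtt 1)$ must lie in $\{\mathtt 2,\mathtt{d-1},\mathtt d\}$ and that each choice pins the entire reordering down to exactly one of (a)--(d), is precisely the part you defer in your ``Main obstacle'' paragraph. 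Without it neither part (1) nor part (2) is established; in particular your argument for (2) (composing reorderings would generate ``too large'' a symmetry group of the array) presupposes facts about how admissible reorderings compose that are only available once the missing classification has been carried out.

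Two further cautions. First, the dictionary you assert between the four orderings and the $Q$-bipartite/$Q$-antipodal conditions is stated rather than derived, and it is exactly in this kind of case analysis that the original source contained the flaw repaired by the cited correction; a genuine write-up must verify, for each candidate $\sigma(\mathtt 1)$, which vanishing patterns among the $q_{\mathtt{ij}}^{\mathtt k}$ are forced, rather than transplant the metric-side slogans by analogy. Second, the appeal to an Askey--Wilson/$q$-Racah parametrisation of the dual eigenvalue array presumes Leonard-type rigidity that requires separate treatment for small $d$ and for the degenerate families; the combinatorial route you mention (tracking which rows of the Krein matrix have exactly two nonzero off-diagonal entries, together with the nonnegativity of the $q_{\mathtt{ij}}^{\mathtt k}$) is the one that actually closes these cases, and none of that bookkeeping appears in your proposal. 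If your intent is, as in the paper, simply to quote the result, a citation suffices; if your intent is to prove it, the tridiagonality analysis must be written out.
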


	The following two theorems by William Martin and Jason Williford are crucial to our method.

	\begin{theorem}[\cite{2017BITMartin}] \label{thm:macrocarpous}
		For every $Q$-polynomial association scheme $\mathfrak{X}$ with $m_{\mathtt 1} > 2$, the entries in the even columns (indexed by $E_{\mathtt 0}, E_{\mathtt 2}, E_{\mathtt 4}, \ldots$) of its second eigenmatrix $Q$ are rational integers, and the entries in the odd columns (indexed by $E_{\mathtt 1}, E_{\mathtt 3}, E_{\mathtt 5}, \ldots$) of its second eigenmatrix $Q$ are algebraic integers of a quadratic field. 
	\end{theorem}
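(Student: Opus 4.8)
The plan is to pit the nontrivial Galois automorphism of the splitting field against the classification of $Q$-polynomial orderings in \cref{thm:pumice}. By \cref{thm:massageuse} the splitting field $K$ of $\mathfrak{X}$ is either $\mathbb{Q}$ or a quadratic field $\mathbb{Q}(\sqrt{D})$, and all entries of $P$ and $Q$ lie in $K$; the entries of $P$, being eigenvalues of the integer matrices $A_i$, are algebraic integers. If $K = \mathbb{Q}$ the only remaining issue is to promote ``rational'' to ``rational integer'' for the entries of $Q$, which I would handle using the $Q$-polynomial recurrence $Q_{i\mathtt{j}} = v^{*}_{\mathtt{j}}(\theta^{*}_i)$ (with $\theta^{*}_i = Q_{i\mathtt{1}}$ the dual eigenvalues and $v^{*}_{\mathtt{j}}$ the degree-$j$ dual polynomial) together with control of the dual intersection array. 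So assume $[K:\mathbb{Q}] = 2$, and let $\sigma$ generate $\Gal(K/\mathbb{Q})$.

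Applied entrywise, $\sigma$ fixes every $A_i$ but permutes the primitive idempotents of the Bose--Mesner algebra; write $\sigma(E_{\mathtt{j}}) = E_{\pi(\mathtt{j})}$, so that $\sigma(Q_{i\mathtt{j}}) = Q_{i\,\pi(\mathtt{j})}$ for all $i$. Since $E_{\mathtt{0}} = \frac{1}{\abs{X}}J$ is rational, $\pi(\mathtt{0}) = \mathtt{0}$, and since $\sigma^{2} = \mathrm{id}$ the permutation $\pi$ is an involution. As $\sigma$ commutes with the Hadamard product, applying it to the identities $E_{\mathtt{j}} = v^{*}_{\mathtt{j}}(E_{\mathtt{1}})$ exhibits $E_{\pi(\mathtt{j})}$ as a polynomial of degree $j$ in $E_{\pi(\mathtt{1})}$; hence $E_{\pi(\mathtt{0})}, E_{\pi(\mathtt{1})}, \dots, E_{\pi(\mathtt{d})}$ is again a $Q$-polynomial ordering. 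By \cref{thm:pumice} it is either the original ordering ($\pi = \mathrm{id}$) or one of the four reorderings listed there.

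The crux is then a combinatorial check: which of those reorderings can be realised by an involution fixing $\mathtt{0}$, and what such an involution does to the even indices. A direct inspection shows that the patterns (a) and (b) are involutions only in very low diameter, that (d) is an involution exactly when $d$ is even and then fixes every even index while pairing the odd indices as $\mathtt{j}\leftrightarrow\mathtt{d-j}$, and that (c) is an involution for certain $d$ but then moves an even index; this last case has to be excluded (I would try to do so via Krein positivity, or via the incompatibility of $m_{\mathtt{1}}>2$ with the dual-bipartite-and-antipodal geometry that (c) encodes), and the residual low-diameter exceptions have to be settled by hand. Modulo that, in every remaining case $\pi$ fixes each even-indexed idempotent, so the even columns of $Q$ are entrywise $\sigma$-fixed, hence rational, hence rational integers by the integrality argument above; and the odd columns have entries in the quadratic field $K$, which are algebraic integers. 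I expect disposing of reordering (c), together with the low-diameter exceptions, to be the main obstacle.
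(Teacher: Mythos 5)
Your overall strategy is exactly the paper's: use \cref{thm:massageuse} to reduce to a quadratic splitting field, let the nontrivial Galois automorphism $\sigma$ permute the primitive idempotents, observe that it produces a second $Q$-polynomial ordering, and then invoke \cref{thm:pumice} to pin down that ordering as an involution fixing $E_{\mathtt 0}$. The gap is in your ``direct inspection'' of the reorderings, which is where you locate the main obstacle --- and that obstacle does not exist. Reading ordering (c) as the sequence $\sigma(E_{\mathtt 0}), \sigma(E_{\mathtt 1}), \ldots, \sigma(E_{\mathtt d})$, one gets $\sigma(E_{\mathtt{2k}}) = E_{\mathtt{2k}}$ and $\sigma(E_{\mathtt{2k+1}}) = E_{\mathtt{d-2k}}$; this is an involution precisely when $d$ is odd, and it fixes \emph{every} even-indexed idempotent, exactly as (d) does for $d$ even. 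So (c) does not move an even index, there is nothing to exclude via Krein positivity or otherwise, and the argument closes uniformly: in either case the even columns of $Q$ are $\sigma$-fixed, hence rational, hence rational integers, while the odd columns lie in the quadratic field. (The paper's proof is precisely this one sentence: both surviving orderings fix $E_{\mathtt 0}, E_{\mathtt 2}, E_{\mathtt 4}, \ldots$.)

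Two smaller points. First, your worry about low-diameter degenerations of (a) and (b) is easily dispatched: an involution realising (a) or (b) for small $d$ either coincides with an instance of (c)/(d) or forces $\sigma$ to fix all idempotents, which would make every entry of $Q$ (hence of $P$) rational and contradict the splitting field being strictly quadratic; the same remark handles $\pi = \mathrm{id}$. Second, your treatment of integrality in the rational case via the dual three-term recurrence is more detailed than the paper, which simply cites that the entries of $Q$ are algebraic integers; either route is fine, but it is worth stating the standard fact that dual eigenvalues of a commutative scheme are algebraic integers rather than re-deriving it.
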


	\begin{proof}
		If the splitting field of association scheme is $\mathbb{Q}$, then the conclusion follows from the fact that the entries of the second eigenmatrix $Q$ are eigenvalues of adjacency matrices. 
		Otherwise by \cref{thm:massageuse} the splitting field is a quadratic field $K = \mathbb{Q}[\sqrt{p}]$. 
		The Galois group $\Gal(K/\mathbb{Q})$ acts naturally on the primitive idempotents. 
		The non-trivial element $\sigma$ of the Galois group induces another $Q$-polynomial ordering. 
		Since $\sigma$ is an involution, the new $Q$-polynomial ordering must be either case (c) or case (d) in \cref{thm:pumice}. 
		The theorem follows by noting that the primitive idempotents $E_{\mathtt 0}, E_{\mathtt 2}, E_{\mathtt 4}, \ldots$ are fixed by $\sigma$.
	\end{proof}

	From \cref{thm:macrocarpous}, one can deduce the following bound on the degree of a $Q$-polynomial association scheme by the valency of its nearest neighbourhood relation.

	\begin{theorem}[\cite{2017BITMartin}] \label{thm:grouped}
		Suppose $\mathfrak{X} = (X, \set{R_i}_{0 \leq i \leq d})$ is a $Q$-polynomial association scheme with respect to the ordering $E_{\mathtt 0}, E_{\mathtt 1}, \ldots, E_{\mathtt d}$ of the primitive idempotents and $m_{\mathtt 1} > 2$. 
		Let $v_1$ be the valency of its nearest neighbourhood relation in the spherical representation of $\mathfrak{X}$ with respect to $E_{\mathtt 1}$. 
		Then $d \leq 4v_1 + 1$. 
		Moreover, if the splitting field of $\mathfrak{X}$ is the rational field, then $d \leq 2v_1 +1$. 
	\end{theorem}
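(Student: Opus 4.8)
The plan is to read off a bound on $d$ from the spectrum of the relation matrix of the nearest neighbourhood relation. Let $R_s$ be that relation, so that $v_1 = k_s$ and its relation matrix $A_s$ is the adjacency matrix of the nearest neighbourhood graph $\Gamma$; write $(\omega_{s,\mathtt j})_{\mathtt j=0}^d$ for the cosine sequence of $R_s$, so that $\omega_{s,\mathtt 1}=\alpha_{\max}$. From the spectral decomposition $A_s=\sum_{\mathtt j=0}^d P_{\mathtt j s}E_{\mathtt j}$ and the relation $P_{\mathtt j s}=k_s\omega_{s,\mathtt j}=v_1\omega_{s,\mathtt j}$, the eigenvalues of $A_s$ are exactly the $d+1$ numbers $v_1\omega_{s,\mathtt j}$, $\mathtt j=0,1,\dots,d$. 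I will show that these are pairwise distinct, that they are algebraic integers lying in $[-v_1,v_1]$, and that enough of them are rational integers to bound $d$.

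The key step is the distinctness. Because $\mathfrak X$ is $Q$-polynomial, the identity $m_{\mathtt 1}\omega_{-,\mathtt 1}\omega_{-,\mathtt j}=\sum_{\mathtt h=0}^d q_{\mathtt{h1}}^{\mathtt j}\omega_{-,\mathtt h}$, together with $q_{\mathtt{h1}}^{\mathtt j}=0$ for $|\mathtt h-\mathtt j|>1$ and the positivity of $q_{\mathtt{j-1,1}}^{\mathtt j}$ and $q_{\mathtt{j+1,1}}^{\mathtt j}$, exhibits $(\omega_{s,\mathtt j})_{\mathtt j}$, normalized by $\omega_{s,\mathtt 0}=1$, as the normalized eigenvector, for the eigenvalue $m_{\mathtt 1}\alpha_{\max}$, of the tridiagonal dual intersection matrix $B^{*}$; here $B^{*}$ has constant row sums $m_{\mathtt 1}$ and positive off-diagonal entries, so it is a tridiagonal matrix of the same shape as the intersection matrix of a distance-regular graph of valency $m_{\mathtt 1}$, and its eigenvalues are $\{m_{\mathtt 1}\alpha_i:0\le i\le d\}$, the largest being $m_{\mathtt 1}\alpha_0=m_{\mathtt 1}$ and the second largest being $m_{\mathtt 1}\alpha_{\max}$. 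The standard fact that the cosine sequence associated with the second largest eigenvalue of such a matrix is strictly decreasing then gives $1=\omega_{s,\mathtt 0}>\omega_{s,\mathtt 1}>\dots>\omega_{s,\mathtt d}$; in particular the $d+1$ eigenvalues $v_1\omega_{s,\mathtt j}$ of $A_s$ are pairwise distinct, so $A_s$ generates the Bose--Mesner algebra. I expect pinning down this monotonicity statement in the required generality (or locating a clean citation for it) to be the main obstacle; the rest is counting.

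To finish, note each $P_{\mathtt j s}=v_1\omega_{s,\mathtt j}$ is an algebraic integer, being an eigenvalue of the integer matrix $A_s$, and lies in $[-v_1,v_1]$ by Perron--Frobenius for the $v_1$-regular graph $\Gamma$. By \cref{thm:massageuse} the splitting field of $\mathfrak X$ is $\mathbb Q$ or a quadratic field. If it is $\mathbb Q$, then all $d+1$ eigenvalues of $A_s$ are distinct rational integers in the set $\{-v_1,\dots,v_1\}$ of cardinality $2v_1+1$, so $d+1\le 2v_1+1$, i.e. $d\le 2v_1\le 2v_1+1$. If the splitting field is quadratic, then by \cref{thm:macrocarpous} the entry $Q_{s,\mathtt j}$ is a rational integer for every even $\mathtt j$; since $P_{\mathtt j s}=v_1\omega_{s,\mathtt j}=v_1 Q_{s,\mathtt j}/m_{\mathtt j}$, it follows that $P_{\mathtt j s}$ is rational, hence (being an algebraic integer) a rational integer in $\{-v_1,\dots,v_1\}$, for each even $\mathtt j$. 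There are $\lfloor d/2\rfloor+1$ even indices $\mathtt j$ in $\{0,1,\dots,d\}$, and the corresponding eigenvalues are distinct by the previous paragraph, so $\lfloor d/2\rfloor+1\le 2v_1+1$; therefore $d\le 2\lfloor d/2\rfloor+1\le 4v_1+1$, as claimed.
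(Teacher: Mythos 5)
The paper itself gives no proof of this theorem: it is cited to Martin and prefaced only by the remark that it can be deduced from \cref{thm:macrocarpous}, and your argument is a correct reconstruction of exactly that deduction --- the $d+1$ eigenvalues $v_1\omega_{s,\mathtt j}$ of the $v_1$-regular nearest-neighbourhood graph are pairwise distinct, lie in $[-v_1,v_1]$, and are forced to be rational integers for all indices $\mathtt j$ (rational splitting field) or for the even indices $\mathtt j$ (quadratic splitting field, via \cref{thm:macrocarpous} and the identity $P_{\mathtt j s}=k_sQ_{s\mathtt j}/m_{\mathtt j}$), after which the counting is as you say. The one ingredient you rightly flag --- that the row $(\omega_{s,\mathtt j})_{\mathtt j}$ of the cosine matrix, being the eigenvector of the irreducible tridiagonal dual intersection matrix $B^*$ for its second-largest eigenvalue $m_{\mathtt 1}\alpha_{\max}$, is strictly decreasing --- is the dual of the standard fact that the cosine sequence of the second-largest eigenvalue of a distance-regular graph is strictly decreasing (see \cite[Proposition~4.1.1]{MR1002568}); its proof uses only the tridiagonal shape, positive off-diagonal entries, nonnegative diagonal and constant row sums, so it applies verbatim to $B^*$ even though its entries need not be integers, and no genuine gap remains beyond attaching that citation.
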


	We end this subsection by another theorem concerning the eigenvalues. 

	\begin{theorem}[{\cite[Theorem 4.4]{MR2494443}}] \label{thm:zymogenous}
		Let $K > 0$ and $S$ the set of all monic polynomials with degree $n$ over the integers, all of whose roots lie in $[-K,K]$. Then $S$ is finite.
	\end{theorem}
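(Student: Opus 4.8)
The plan is to bound the coefficients of any polynomial in $S$ in terms of $n$ and $K$ alone, and then to invoke the obvious finiteness of the integer points in a bounded box. First I would take an arbitrary $p \in S$ and write it in factored form $p(t) = \prod_{i=1}^{n}(t - r_i)$ with $r_1, \ldots, r_n \in [-K,K]$; since the interval is real, the roots are real numbers of absolute value at most $K$. Expanding the product and matching powers of $t$ (Vieta's formulas), the coefficient of $t^{n-j}$ in $p$ equals $(-1)^{j} e_j(r_1, \ldots, r_n)$, where $e_j$ denotes the $j$-th elementary symmetric polynomial in the roots; by hypothesis this number is an integer.

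Next I would estimate these coefficients. As $e_j$ is a sum of $\binom{n}{j}$ monomials, each a product of $j$ of the roots, and $\abs{r_i} \le K$ for all $i$, the triangle inequality yields $\abs{e_j(r_1, \ldots, r_n)} \le \binom{n}{j} K^{j}$ for every $j$. Hence every coefficient of every $p \in S$ is an integer of absolute value at most $B := \binom{n}{\lfloor n/2 \rfloor}\,\max(1, K)^{n}$, a quantity depending only on $n$ and $K$.

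Finally, a monic polynomial of degree $n$ is determined by its $n$ coefficients of degrees $0, 1, \ldots, n-1$, each of which, by the previous paragraph, is an integer lying in the interval $[-B, B]$. There are at most $2\lfloor B \rfloor + 1$ such integers, so $\abs{S} \le (2\lfloor B \rfloor + 1)^{n} < \infty$, which proves the claim. The argument is entirely elementary; the only step with any content is the elementary-symmetric-function estimate that converts a bound on the roots into a bound on the coefficients, and it presents no real obstacle.
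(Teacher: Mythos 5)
Your proof is correct and is essentially the same argument as the one in the cited source (the paper itself only quotes this theorem from Martin--Williford without reproducing a proof): bound each coefficient via the elementary symmetric functions of the roots, note the coefficients are integers in a box depending only on $n$ and $K$, and count. The bound $\abs{e_j} \le \binom{n}{j}K^j \le \binom{n}{\lfloor n/2\rfloor}\max(1,K)^n$ is valid, so there is nothing to add.
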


	\begin{corollary}[{\cite[Corollary 4.5]{MR2494443}}] \label{coro:resemblingly}
		Let $K > 0$ and let $n$ be a positive integer. Then there are only finitely many algebraic integers $a$ satisfying 
		\begin{enumerate}
			\item the minimal polynomial of $a$ over the rationals has degree at most $n$, and 
			\item $a$ and all of its algebraic conjugates lie in the interval $[-K,K]$.
		\end{enumerate}
	\end{corollary}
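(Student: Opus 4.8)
The plan is to deduce the statement directly from \cref{thm:zymogenous} by sorting the candidate algebraic integers according to the degree of their minimal polynomial. First I would fix an algebraic integer $a$ meeting both conditions, and let $m_a(t) \in \mathbb{Q}[t]$ be its minimal polynomial over the rationals, of some degree $d$ with $1 \leq d \leq n$. The key observation is that, because $a$ is an \emph{algebraic integer}, its minimal polynomial $m_a(t)$ is in fact monic with \emph{integer} coefficients; this is the standard fact supplied by Gauss's lemma, so $m_a(t)$ is one of the polynomials counted by \cref{thm:zymogenous}. Moreover, the full set of roots of $m_a(t)$ is precisely the set of algebraic conjugates of $a$, all of which lie in $[-K,K]$ by hypothesis~(2).

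Next I would apply \cref{thm:zymogenous} separately for each fixed degree $d \in \set{1, 2, \ldots, n}$: for each such $d$, the set of monic integer polynomials of degree $d$ all of whose roots lie in $[-K,K]$ is finite. Every candidate $a$ whose minimal polynomial has degree exactly $d$ arises as a root of one of these finitely many polynomials, and each polynomial contributes at most $d \leq n$ distinct roots. Hence the number of admissible $a$ of any fixed degree $d$ is finite.

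Finally, since the degree $d$ ranges over the finite set $\set{1, 2, \ldots, n}$, I would take the union over these finitely many degrees. A finite union of finite sets is finite, which yields the desired conclusion that there are only finitely many such algebraic integers $a$.

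There is essentially no genuine obstacle here; the statement is a bookkeeping corollary of \cref{thm:zymogenous}. The only point requiring care is the reduction that identifies the relevant monic integer polynomial: one must invoke that the minimal polynomial of an algebraic integer is monic with integer coefficients so that \cref{thm:zymogenous} is actually applicable, and one must remember to stratify by each degree $d \leq n$ rather than attempting to apply the theorem in a single step, since \cref{thm:zymogenous} is phrased for a fixed degree $n$.
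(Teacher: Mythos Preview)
Your argument is correct and is exactly the natural deduction of the corollary from \cref{thm:zymogenous}. Note that the paper itself does not supply a proof of this statement; it is quoted as \cite[Corollary 4.5]{MR2494443}, and your reasoning matches the standard derivation given there.
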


\subsection{Light tail}

	Let $\mathfrak{X} = (X, \set{R_i}_{0 \leq i \leq d})$ be a partially metric association scheme, and let $A_1$ be the adjacency matrix of $R_1$. 
	A primitive idempotent $E_{\mathtt{j}}$ is called a \emph{light tail} if the matrix $F := \sum_{\mathtt{h \neq 0}} q_{\mathtt{jj}}^{\mathtt{h}} E_{\mathtt{h}}$ is nonzero and $A_1F = \eta F$ for some real number $\eta$. 
	The following theorem characterizes when light tail exists.

	\begin{theorem}[{\cite[Theorem 3.2]{MR1002568} and \cite[Theorem 2.2]{MR3772733}}]
		Let $\mathfrak{X} = (X, \set{R_i}_{0 \leq i \leq d})$ be a partially metric association scheme of degree $d \geq 2$ and assume that the valency of the relation $R_1$ is at least three. 
		Let $E$ be a primitive idempotent with multiplicity $m$ for corresponding eigenvalue $\theta \neq \pm k$. Then
		\begin{equation}
			m \geq k - \frac{k(\theta + 1)^2 a_1 (a_1 + 1)}{((a_1 + 1)\theta + k)^2 + k a_1 b_1}
		\end{equation}
		with equality if and only if $E$ is a light tail.
	\end{theorem}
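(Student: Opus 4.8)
The plan is to run a local (first--subconstituent) argument at a fixed base vertex: extract the inequality from a Cauchy--Schwarz estimate on the rank of a suitable Gram matrix, and then identify the equality case with the light tail condition.

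First I would pass to the spherical representation $x\mapsto\overline x$ attached to $E$, so that $\langle\overline x,\overline y\rangle=\omega_i$ whenever $(x,y)\in R_i$, where $\omega_0=1$, $\omega_1=\theta/k$, and, because $\mathfrak X$ is partially metric, $\omega_2$ is forced by the three--term recurrence: from $\sum_{y\in R_1(x)}\overline y=\theta\,\overline x$ (a rewriting of $A_1E=\theta E$) and $\lVert\theta\,\overline x\rVert^2=\theta^2$ one gets $\theta^2=k+k a_1\omega_1+k b_1\omega_2$, hence $\omega_2=(\theta^2-a_1\theta-k)/(k b_1)$ --- here $b_1\neq 0$ because the scheme graph of $R_1$ is connected of diameter at least $2$, and $\theta\neq\pm k$, so every denominator that appears is harmless. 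Now fix $x$ and, for each $y\in R_1(x)$, set $\widetilde y:=\overline y-\omega_1\overline x$. Then $\widetilde y\perp\overline x$ and $\widetilde y$ lies in the $m$--dimensional image of $E$, so the $k$ vectors $\widetilde y$ span a space of dimension at most $m-1$, and their Gram matrix $G$ (a $k\times k$ positive semidefinite matrix) satisfies $\rank G\le m-1$.

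Next, since $\mathfrak X$ is partially metric, any two \emph{distinct} neighbours $y,z$ of $x$ satisfy $(y,z)\in R_1\cup R_2$ (the common neighbour $x$ forces $\dist(y,z)\le 2$, and $R_1,R_2$ are the distance graphs), so $\langle\widetilde y,\widetilde z\rangle=\langle\overline y,\overline z\rangle-\omega_1^2$ takes only the values $1-\omega_1^2$, $\omega_1-\omega_1^2$, $\omega_2-\omega_1^2$. Counting the $k$ diagonal entries, the $k a_1$ ordered pairs of adjacent neighbours, and the $k b_1$ ordered pairs of neighbours at distance $2$ gives $\operatorname{tr}G=k(1-\omega_1^2)$ and $\operatorname{tr}(G^2)=k(1-\omega_1^2)^2+k a_1(\omega_1-\omega_1^2)^2+k b_1(\omega_2-\omega_1^2)^2$. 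Applying $(\operatorname{tr}G)^2\le(\rank G)\,\operatorname{tr}(G^2)$ (Cauchy--Schwarz on the eigenvalues of the nonzero positive semidefinite $G$; note $\operatorname{tr}G=k(1-\omega_1^2)>0$) together with $\rank G\le m-1$ yields $m\ge 1+(\operatorname{tr}G)^2/\operatorname{tr}(G^2)$. Substituting $\omega_1=\theta/k$ and the formula for $\omega_2$ and simplifying --- using $a_1+b_1=k-1$ and $(a_1+1)\theta+k=(a_1+1)(\theta+1)+b_1$, which is routine --- rewrites the right--hand side as $k-\dfrac{k(\theta+1)^2 a_1(a_1+1)}{((a_1+1)\theta+k)^2+k a_1 b_1}$, which is the claimed bound.

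For the equality statement, equality forces $\rank G=m-1$ \emph{and} equality in Cauchy--Schwarz, hence $G$ is a positive multiple of the orthogonal projection onto $\operatorname{span}\{\widetilde y\}$, which is therefore all of the $(m-1)$--dimensional space $\overline x^{\perp}$ inside the image of $E$. Unwinding this through $\widetilde y=\overline y-\omega_1\overline x$ and $\sum_{y\in R_1(x)}\overline y=\theta\,\overline x$ turns it into the operator identity $\sum_{y\in R_1(x)}\overline y\,\overline y^{\top}=c\,I+(\theta\omega_1-c)\,\overline x\,\overline x^{\top}$, valid at \emph{every} vertex $x$, with $c=k(1-\omega_1^2)/(m-1)$; equivalently, every local graph has at most three distinct eigenvalues, taking prescribed values (so, in the nondegenerate case, is strongly regular). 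Plugging this identity into the entrywise expansion of $A_1(E\circ E)$, and using $E\circ E=\tfrac{1}{\abs{X}}\bigl(mE_{\mathtt 0}+F\bigr)$ with $F=\sum_{\mathtt h\neq\mathtt 0}q_{\mathtt{jj}}^{\mathtt h}E_{\mathtt h}$, one gets $A_1F=\eta F$ with $\eta=\theta\omega_1-c$, i.e.\ $E$ is a light tail. Conversely, given $A_1F=\eta F$, comparing the $(x,x)$--entries of $A_1F$ and of $\eta F$ pins down $\eta$ and reverses the computation back towards the operator identity. The step I expect to be the real obstacle is exactly this reversal: upgrading the scalar (entrywise) information furnished by $A_1F=\eta F$ to the pointwise operator identity above --- equivalently, forcing every local graph to be strongly regular with the correct parameters. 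Here one must exploit the global structure (homogeneity of the scheme, together with positivity of $F$ as a nonnegative combination of primitive idempotents), and this is the substance of the two cited theorems.
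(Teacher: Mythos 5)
First, a point of reference: the paper does not prove this theorem at all --- it is quoted verbatim from the cited sources, so there is no internal proof to compare yours against. Judged on its own terms, your derivation of the \emph{inequality} is correct and is the standard first-subconstituent argument: the vectors $\widetilde y=\overline y-\omega_1\overline x$ for $y\in R_1(x)$ live in the $(m-1)$-dimensional space $\overline x^{\perp}\cap\operatorname{im}E$; partial metricity pins down $\omega_2=(\theta^2-a_1\theta-k)/(kb_1)$ and guarantees that the off-diagonal Gram entries take only the values $\omega_1-\omega_1^2$ and $\omega_2-\omega_1^2$, with multiplicities $ka_1$ and $kb_1$; and $(\operatorname{tr}G)^2\le(m-1)\operatorname{tr}(G^2)$ gives the bound. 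I checked that your closed form $1+(\operatorname{tr}G)^2/\operatorname{tr}(G^2)$ does simplify to $k-k(\theta+1)^2a_1(a_1+1)/\bigl(((a_1+1)\theta+k)^2+ka_1b_1\bigr)$. The implication ``equality $\Rightarrow$ light tail'' is also sound: both ends of the chain $m-1\ge\rank G_x\ge(\operatorname{tr}G_x)^2/\operatorname{tr}(G_x^2)$ are independent of $x$, so equality collapses the chain at every vertex, yielding your operator identity $\sum_{y\in R_1(x)}\overline y\,\overline y^{\top}=c\,\Pi+(\theta\omega_1-c)\,\overline x\,\overline x^{\top}$ (with $\Pi$ the projection onto $\operatorname{im}E$) everywhere, and reading off the entries of $A_1(E\circ E)$ then gives $A_1F=(\theta\omega_1-c)F$ with $F\neq 0$ since $F_{xx}=m(m-1)/\abs{X}>0$.

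The gap is where you said it is, and it is genuine rather than a detail to be checked. Equality in the bound is equivalent to the vanishing of the symmetric operator $M_x:=\sum_{y\in R_1(x)}\widetilde y\,\widetilde y^{\top}-c\,(\Pi-\overline x\,\overline x^{\top})$, because $\operatorname{tr}(M_x^2)=\operatorname{tr}(G^2)-(\operatorname{tr}G)^2/(m-1)$. The hypothesis $A_1F=\eta F$, unwound entrywise, yields only $\overline z^{\top}M_x\overline z=0$ for all $z\in X$ (after first forcing $\eta=\theta\omega_1-c$ by summing over $z$ and using that the row sums of $F$ vanish). But $\set{\overline z\,\overline z^{\top}:z\in X}$ spans a subspace of the symmetric operators on $\operatorname{im}E$ whose dimension equals $\abs{\set{\mathtt h: q_{\mathtt{jj}}^{\mathtt h}\neq 0}}$ --- the rank of $(\langle\overline z,\overline w\rangle^2)_{z,w}$, which is a scalar multiple of $E\circ E$ --- and this is in general far smaller than $\binom{m+1}{2}$. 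So the vanishing of these diagonal quadratic-form values does not force $M_x=0$, and polarization is unavailable because $\overline z+\overline w$ is not again a point of $X$. Your plan to ``reverse the computation'' therefore stalls exactly here; closing it requires a different mechanism, and that is precisely the content supplied by the cited proofs. For what it is worth, the only use of this theorem in the paper --- concluding that $E_{\mathtt 1}$ is a light tail when $k_1=m_{\mathtt 1}=4$ and $a_1=0$, hence $q_{\mathtt{11}}^{\mathtt 1}=0$ --- relies solely on the direction you did prove.
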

	Note that if the first primitive idempotent $E_\mathtt{1}$ of a $Q$-polynomial association scheme is a light tail, then we have $q_{\mathtt{11}}^{\mathtt{1}} = 0$.

\subsection{Relation-distribution diagram}

	Given a (commutative) association scheme $\mathfrak{X} = (X, \set{R_i}_{0 \leq i \leq d})$, for every relation $R_i$ of $\mathfrak{X}$ there is an associated \emph{relation-distribution diagram} $\Delta_i$ to $R_i$. 
	The relation-distribution diagram $\Delta_i$ takes all the relations of the association scheme as the vertices, and there is an arc of weight $p_{hi}^j$ from vertex $R_j$ to vertex $R_h$ whenever $p_{hi}^j > 0$. 
	Note that there could be loops and the weight of the arc from $R_j$ to $R_h$ may be different from that of the arc from $R_h$ to $R_j$. 
	Since $\sum_{h=0}^d p_{hi}^j = k_i$, the total weight of the out arcs from any vertex $R_j$ is a constant $k_i$. 
	In this paper, we mostly focus on the relation-distribution diagram associated to the first relation in the partially metric associated scheme.

	In \cite{MR1635554} Yamazaki proves a lemma putting restrictions on the structure of relation-distribution diagram, and later it is generalized in \cite{MR3772733}.

	\begin{lemma}[{\cite[Lemma 2.8]{MR1635554} \cite[Lemma 2.4]{MR3772733}}]
		Let $\mathfrak{X} = (X, \set{R_i}_{0 \leq i \leq d})$ be a (symmetric) associated scheme with degree $d \geq 3$ and a connected scheme graph $\Gamma$ of $R_1$. 
		Let $x, z$ be vertices such that $(x,z) \in R_2$ and $\dist(x,z) = i \geq 2$. 
		Assume that there exist two distinct neighbours $z_3, z_4$ of $z$ and two distinct relations $R_3, R_4$ such that $(x,z) \in R_3$, $(x,z) \in R_4$, $\dist(x, z_3) = \dist(x, z_4) = i + 1$, and $c_{i+1}(x,z_3) = 1$. 
		Then there exists a relation $R_5$ such that $p_{35}^1 \neq 0$, $p_{45}^1 \neq 0$, and $R_5 \cap \Gamma_i = \emptyset$, where $\Gamma_i$ is the distance-$i$ graph of $\Gamma$.
	\end{lemma}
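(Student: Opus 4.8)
The plan is to reformulate the hypotheses inside the scheme graph $\Gamma$ of $R_1$ and its relation-distribution diagram $\Delta_1$, and then to exhibit $R_5$ as the relation of $x$ to a suitably chosen common neighbour (in $\Gamma$) of $z_3$ and $z_4$. Two parts are immediate. The walk $x,z_3,z$ realizes $(x,z_3)\in R_3$, $(z_3,z)\in R_1$, $(z,x)\in R_2$, so $p_{32}^1\neq 0$, and likewise $x,z_4,z$ gives $p_{42}^1\neq 0$; more to the point, by the triangle condition for the intersection numbers ($p_{35}^1\neq 0\iff p_{51}^3\neq 0$, and similarly for $R_4$), a relation $R_5$ satisfies $p_{35}^1\neq 0$ and $p_{45}^1\neq 0$ exactly when some neighbour of $z_3$ is in relation $R_5$ to $x$ and some neighbour of $z_4$ is in relation $R_5$ to $x$. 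The common neighbour $z$ of $z_3$ and $z_4$ shows that $R_2$ is such a relation, but $(x,z)\in R_2\cap\Gamma_i$, so $R_2$ fails the third requirement. The entire content of the lemma is therefore the existence of a \emph{different} common-neighbour relation of $R_3$ and $R_4$ in $\Delta_1$ that avoids $\Gamma_i$.

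The hypothesis $c_{i+1}(x,z_3)=1$ is what turns ``different from $R_2$'' into ``disjoint from $\Gamma_i$''. Every neighbour of $z_3$ lies at distance $i$, $i+1$ or $i+2$ from $x$, and $z$ is the only one at distance $i$; hence any common neighbour $u\neq z$ of $z_3$ and $z_4$ has $\dist(x,u)\neq i$, and if $R_5$ denotes the relation of $(x,u)$ then automatically $p_{35}^1\neq 0$ and $p_{45}^1\neq 0$. Two things must still be supplied: the promotion of $\dist(x,u)\neq i$ to the full statement $R_5\cap\Gamma_i=\emptyset$, and the existence of such a vertex $u$. For the promotion I would examine which relations occurring among the neighbours of $z_3$ can be realized at distance $i$ from $x$ at all: $c_{i+1}(x,z_3)=1$ forces this to be $R_2$ alone, and the constraints on the $p_{ij}^h$ should then rule out a distance-$i$ pair anywhere in $R_5$ (in the partially metric case of interest $\Gamma_i$ is itself a single relation, so $R_5\neq R_2$ already yields $R_5\cap\Gamma_i=\emptyset$). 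For the existence of $u$ I would count the $k_1$ neighbours of $z_3$ and the $k_1$ neighbours of $z_4$ according to their relations to $x$: one unit of $\Gamma(z_3)$ is pinned to $z$ by $c_{i+1}(x,z_3)=1$, while $R_3\neq R_4$ and the out-arcs from $R_3$ and from $R_4$ in $\Delta_1$ constrain the allocation, and I expect this to force $\Gamma(z_3)$ and $\Gamma(z_4)$ to intersect somewhere other than $z$.

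The step I expect to be the main obstacle is assembling a single $R_5$ with all three properties simultaneously. A bare counting argument only produces \emph{some} common neighbour of $R_3$ and $R_4$ in $\Delta_1$ and gives no control on graph-distance, so the multiplicity-one hypothesis $c_{i+1}(x,z_3)=1$ has to be used not merely to fix the neighbour $z$ of $z_3$, but to control how \emph{all} the relations reaching distance $i$ can sit inside $\Gamma(z_3)$ and, through the edge $z_3z$ and the vertex $z_4$, inside $\Gamma(z_4)$ as well. Once that rigidity is in hand, the two non-vanishing statements $p_{35}^1\neq 0$ and $p_{45}^1\neq 0$ are routine consequences of the triangle condition.
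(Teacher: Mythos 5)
First, a point of reference: the paper does not prove this lemma at all --- it is quoted verbatim from Yamazaki \cite{MR1635554} and from \cite{MR3772733}, so there is no in-paper argument to measure you against; your proposal has to stand on its own. (You also silently corrected the statement's typo, reading ``$(x,z_3)\in R_3$, $(x,z_4)\in R_4$'', which is the intended hypothesis.) Several of your reductions are sound: $p_{35}^1\neq 0$ is indeed equivalent, via $k_hp_{ij}^h=k_ip_{jh}^i=k_jp_{hi}^j$, to some neighbour of $z_3$ lying in $R_5(x)$; the condition $c_{i+1}(x,z_3)=1$ does force $p_{21}^3=1$ and $p_{h1}^3=0$ for every other relation $R_h$ meeting $\Gamma_i$; and the ``promotion'' you worry about is actually automatic and needs no further work, because in an association scheme $(A_1^\ell)_{xy}$ depends only on the relation containing $(x,y)$, so graph distance is constant on each relation and $\dist(x,u)\neq i$ already gives $R_5\cap\Gamma_i=\emptyset$.

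The genuine gap is exactly the step you flag as the ``main obstacle'', and it is not a technicality: your construction requires a common neighbour $u\neq z$ of $z_3$ and $z_4$ in $\Gamma$, and nothing in the hypotheses supplies one. The number of common neighbours of $z_3$ and $z_4$ is $p_{11}^{6}$ where $R_6$ is the relation of $(z_3,z_4)$, and the hypotheses are compatible with $p_{11}^{6}=1$, i.e.\ with $z$ being the \emph{only} common neighbour; indeed your own reduction shows that if no admissible $R_5$ existed then every common neighbour of $z_3$ and $z_4$ would have to lie in $\Gamma_i(x)\cap\Gamma(z_3)=\{z\}$, which merely yields $p_{11}^{6}=1$ rather than a contradiction. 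Note also that the conclusion of the lemma is strictly weaker than what you are trying to establish: it asks only for a relation $R_5$ hit by \emph{some} neighbour $w_3$ of $z_3$ and \emph{some} neighbour $w_4$ of $z_4$ with $(x,w_3),(x,w_4)\in R_5$, where $w_3$ and $w_4$ may be different vertices, so insisting on a common neighbour attacks a stronger statement that may simply be false. Your fallback counting sketch does not close this either: it shows the set $T_3$ of relations $R_h$ with $p_{h1}^3>0$ and $R_h\cap\Gamma_i=\emptyset$ carries total weight $k_1-1$ and $T_4$ carries weight $k_1-c_{i+1}(x,z_4)$, but with no bound on the number of available relations these two sets can a priori be disjoint, so disjointness must be refuted by a finer structural argument (this is where the cited proofs do their real work, using the pair $(z_3,z_4)$ and the uniqueness of $z$ more seriously). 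As it stands the proposal establishes the easy implications surrounding the lemma but not the lemma itself.
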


\subsection{Algorithm to generate feasible relation-distribution diagrams}

Recall that the cosine sequences satisfy $k_i \omega_{i,-} \omega_{j,-} = \sum_{h=0}^d p_{hi}^j \omega_{h,-}$ and $m_{\mathtt{i}} \omega_{-,\mathtt{i}} \omega_{-,\mathtt{j}} = \sum_{\mathtt{h=0}}^\mathtt{d} q_{\mathtt{hi}}^{\mathtt{j}} \omega_{-,\mathtt{h}}$. 
These equations are identities on the cosines of adjacent vertices in the relation-distribution diagram and cosines of the same relation for different idempotents. 
We can use these to generate feasible relation-distribution diagrams together with the cosines step-by-step.

Before the algorithm is shown, let us prepare some necessary facts. 

\begin{proposition} \label{prop:cateress}
	Let $\mathfrak{X} = (X, \set{R_i}_{0 \leq i \leq d})$ be a Q-polynomial association scheme and $R_i$ a relation of valency $k_i$. 
	Then the number of possible cosines $\omega_{i,-}$ of $R_i$ is finite.
\end{proposition}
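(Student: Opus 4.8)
The plan is to bound the cosine $\omega_{i,1}$ (the inner product of two points in relation $R_i$ under the embedding $E_{\mathtt 1}$), show it lies in a controlled set of algebraic integers, and then argue that the whole cosine sequence $\omega_{i,-}$ is determined by finitely much data. First I would observe that $\omega_{i,1} \in [-1,1]$ since it is the cosine of an angle between unit vectors in the spherical representation with respect to $E_{\mathtt 1}$. By \cref{thm:massageuse} the splitting field of $\mathfrak{X}$ is at most a quadratic extension of $\mathbb{Q}$, so $\omega_{i,1}$ is an algebraic integer (it is an eigenvalue entry of $Q$, up to the known scaling; more directly, $P_{i}(1)/k_i$ arises from eigenvalues of the integral matrices $A_j$) whose minimal polynomial over $\mathbb{Q}$ has degree at most $2$. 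Moreover its Galois conjugate, being another entry of the same column-type of $Q$, is again a cosine and hence also lies in $[-1,1]$. Thus $\omega_{i,1}$ satisfies the hypotheses of \cref{coro:resemblingly} with $K = 1$ and $n = 2$, so there are only finitely many possibilities for $\omega_{i,1}$.

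Next I would leverage the $Q$-polynomial structure to pass from $\omega_{i,1}$ to the full cosine sequence. Because $\mathfrak{X}$ is $Q$-polynomial, each $E_{\mathtt j}$ is a polynomial $v_{\mathtt j}$ of degree $\mathtt j$ in $E_{\mathtt 1}$ under the Hadamard product, and correspondingly $\omega_{i,\mathtt j} = v_{\mathtt j}(\omega_{i,1})$ where the polynomials $v_{\mathtt j}$ depend only on the Krein array, not on $i$. Equivalently, using the three-term recurrence $m_{\mathtt 1}\,\omega_{i,\mathtt 1}\,\omega_{i,\mathtt j} = q_{\mathtt{1}\mathtt{j}}^{\mathtt{j-1}}\,\omega_{i,\mathtt{j-1}} + q_{\mathtt{1}\mathtt{j}}^{\mathtt{j}}\,\omega_{i,\mathtt{j}} + q_{\mathtt{1}\mathtt{j}}^{\mathtt{j+1}}\,\omega_{i,\mathtt{j+1}}$, the entire sequence $(\omega_{i,\mathtt 0},\omega_{i,\mathtt 1},\ldots,\omega_{i,\mathtt d})$ is recovered from $\omega_{i,1}$ together with the Krein parameters. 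So once $\omega_{i,1}$ is pinned down to one of finitely many values, the cosine sequence $\omega_{i,-}$ is determined, giving the claim.

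The one subtlety — and the step I expect to require the most care — is that the argument as stated makes $\omega_{i,1}$ range over a finite set that a priori depends on the Krein array of $\mathfrak{X}$, whereas the proposition should give finiteness of the set of possible $\omega_{i,-}$ uniformly once $k_i$ is fixed. To close this gap I would use the relation identities $k_i\,\omega_{i,-}\,\omega_{j,-} = \sum_{h} p_{hi}^{j}\,\omega_{h,-}$ applied with $j = i$ and $h$ ranging over the relations in the support of $A_i^2$: the key point is that the multiplicity $m_{\mathtt 1}$ and the valency $k_i$ alone constrain $\omega_{i,1}$ via $\sum_{i} k_i \,\omega_{i,1}^2 = $ (a fixed value determined by $m_{\mathtt 1}$) together with $\sum_i k_i\,\omega_{i,1} = 0$, so the $\omega_{i,1}$ lie in a compact set, and combined with the degree-$\leq 2$ algebraicity from \cref{thm:massageuse} and \cref{coro:resemblingly} we still get a finite list. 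An alternative, cleaner route avoiding any dependence on the ambient scheme: note that $k_i$ bounds $m_{\mathtt 1}$-dimensional geometric data — the point is that $\omega_{i,1}$ is an eigenvalue of an integer matrix (namely $A_i$ has eigenvalue $P_i(\mathtt 1) = k_i\,\omega_{i,1}$ on $E_{\mathtt 1}$) whose absolute value is at most $k_i$, with at most one Galois conjugate also of absolute value at most $k_i$; then \cref{thm:zymogenous}/\cref{coro:resemblingly} with $K = k_i$ and $n = 2$ directly yields finiteness of the possible values of $k_i\,\omega_{i,1}$, hence of $\omega_{i,1}$, hence (by the recurrence above) of $\omega_{i,-}$.
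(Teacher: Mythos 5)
Your final ``alternative, cleaner route'' is exactly the paper's proof: $\omega_{i,-}=\lambda/k_i$ with $\lambda$ an eigenvalue of the integer matrix $A_i$, so $\lambda$ and its at most one Galois conjugate (by \cref{thm:massageuse}) lie in $[-k_i,k_i]$, and \cref{coro:resemblingly} applies with $K=k_i$ and $n=2$. The only caveat is that your opening claim that $\omega_{i,\mathtt 1}$ itself is an algebraic integer is false in general (e.g.\ $-\frac{1}{4}$ for $K_5$), which is precisely why one must pass to $k_i\,\omega_{i,\mathtt 1}$ as you do at the end; the extra recurrence and compactness considerations are not needed.
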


\begin{proof}
	Note that $\omega_{i,-} = \lambda / k_i$ where $\lambda$ is an eigenvalue of $R_i$. 
	The claim follows from \cref{coro:resemblingly}.
\end{proof}

\begin{proposition} \label{prop:thaumatrope}
	Let $\mathfrak{X} = (X, \set{R_i}_{0 \leq i \leq d})$ be a Q-polynomial association scheme with natural ordering of primitive idempotents. 
	Then $q_{\mathtt{01}}^{\mathtt{0}} = 0$, $q_{\mathtt{01}}^{\mathtt{1}} = 1$, $q_{\mathtt{11}}^{\mathtt{0}} = m_{\mathtt{1}}$, $q_{\mathtt{11}}^{\mathtt{1}} = \frac{(m_{\mathtt{1}} -1)\omega_{-,\mathtt{2}} - m_{\mathtt{1}} \omega_{-,\mathtt{1}}^2 + 1}{\omega_{-,\mathtt{2}} - \omega_{-,\mathtt{1}}}$ and $q_{\mathtt{21}}^{\mathtt{1}} = m_{\mathtt{1}} - 1 - q_{\mathtt{11}}^{\mathtt{1}}$.
\end{proposition}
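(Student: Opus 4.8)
The plan is to read off the two Krein parameters carrying the index $\mathtt 0$ directly from the Hadamard product of idempotents, and to extract the remaining two from the cosine-sequence form of the Krein relation attached to $E_{\mathtt 1}\circ E_{\mathtt 1}$, specialised first at the trivial relation and then at a suitable non-trivial one. The only identities of the scheme I will use are $E_{\mathtt 0}=\tfrac{1}{\abs X}J$, the definition $m_{\mathtt i}=q_{\mathtt{ii}}^{\mathtt 0}$, and the $Q$-polynomial property of the chosen ordering.

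First I would dispose of the easy entries. Since $E_{\mathtt 0}=\tfrac{1}{\abs X}J$ and $J\circ M=M$ for every matrix $M$, one has $E_{\mathtt 0}\circ E_{\mathtt 1}=\tfrac{1}{\abs X}E_{\mathtt 1}$; comparing this with $E_{\mathtt 0}\circ E_{\mathtt 1}=\tfrac{1}{\abs X}\sum_{\mathtt h}q_{\mathtt{01}}^{\mathtt h}E_{\mathtt h}$ and using the linear independence of the primitive idempotents forces $q_{\mathtt{01}}^{\mathtt h}=\delta_{\mathtt h,\mathtt 1}$, that is, $q_{\mathtt{01}}^{\mathtt 0}=0$ and $q_{\mathtt{01}}^{\mathtt 1}=1$. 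The identity $q_{\mathtt{11}}^{\mathtt 0}=m_{\mathtt 1}$ is nothing but the definition of $m_{\mathtt 1}$. Next I would invoke the cosine-sequence form of the Krein relations recorded above, namely $m_{\mathtt i}\,\omega_{-,\mathtt i}\,\omega_{-,\mathtt j}=\sum_{\mathtt h=0}^{d}q_{\mathtt{hi}}^{\mathtt j}\,\omega_{-,\mathtt h}$, taken with $\mathtt i=\mathtt j=\mathtt 1$: for every relation index $i$ this reads $m_{\mathtt 1}\,\omega_{i,\mathtt 1}^{2}=\sum_{\mathtt h=0}^{d}q_{\mathtt{h1}}^{\mathtt 1}\,\omega_{i,\mathtt h}$. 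Because the ordering is $Q$-polynomial, $E_{\mathtt 1}\circ E_{\mathtt 1}\in\lrangle{E_{\mathtt 0},E_{\mathtt 1},E_{\mathtt 2}}$, so $q_{\mathtt{h1}}^{\mathtt 1}=0$ for $\mathtt h\ge\mathtt 3$; combining with $q_{\mathtt{01}}^{\mathtt 1}=1$ and $\omega_{i,\mathtt 0}=1$, the identity collapses to $m_{\mathtt 1}\,\omega_{i,\mathtt 1}^{2}=1+q_{\mathtt{11}}^{\mathtt 1}\,\omega_{i,\mathtt 1}+q_{\mathtt{21}}^{\mathtt 1}\,\omega_{i,\mathtt 2}$. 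Evaluating at the trivial relation $i=0$, where $\omega_{0,\mathtt 1}=\omega_{0,\mathtt 2}=1$, gives $m_{\mathtt 1}=1+q_{\mathtt{11}}^{\mathtt 1}+q_{\mathtt{21}}^{\mathtt 1}$, i.e. $q_{\mathtt{21}}^{\mathtt 1}=m_{\mathtt 1}-1-q_{\mathtt{11}}^{\mathtt 1}$.

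Finally, substituting this expression for $q_{\mathtt{21}}^{\mathtt 1}$ back into the collapsed identity and solving the resulting linear equation for $q_{\mathtt{11}}^{\mathtt 1}$ yields
\[
  q_{\mathtt{11}}^{\mathtt 1}=\frac{(m_{\mathtt 1}-1)\,\omega_{i,\mathtt 2}-m_{\mathtt 1}\,\omega_{i,\mathtt 1}^{2}+1}{\omega_{i,\mathtt 2}-\omega_{i,\mathtt 1}},
\]
which is the asserted formula, the ``$-$'' in the statement being a placeholder for the relation index $i$. The only point that requires a word of care — and the sole (mild) obstacle — is the legitimacy of this division: one must know that $\omega_{i,\mathtt 1}\ne\omega_{i,\mathtt 2}$ for at least one index $i$, and that the value of the fraction is then independent of which such $i$ is used. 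The latter is automatic since $q_{\mathtt{11}}^{\mathtt 1}$ is a scheme constant; for the former, note that the columns $\omega_{-,\mathtt 1}$ and $\omega_{-,\mathtt 2}$ of the (invertible) cosine matrix are linearly independent, hence distinct, while agreeing in the coordinate $i=0$, so some coordinate $i\ge 1$ separates them.
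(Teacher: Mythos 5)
Your proof is correct and follows essentially the same route as the paper's: the first three identities are standard facts, and the last two are obtained from the Krein relation $m_{\mathtt 1}\omega_{-,\mathtt 1}^2 = q_{\mathtt{01}}^{\mathtt 1} + q_{\mathtt{11}}^{\mathtt 1}\omega_{-,\mathtt 1} + q_{\mathtt{21}}^{\mathtt 1}\omega_{-,\mathtt 2}$ together with the row sum $q_{\mathtt{01}}^{\mathtt 1}+q_{\mathtt{11}}^{\mathtt 1}+q_{\mathtt{21}}^{\mathtt 1}=m_{\mathtt 1}$, exactly as in the paper. Your additional remarks on the legitimacy of the division are a welcome refinement the paper leaves implicit.
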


\begin{proof}
	The first three identities hold for every symmetric association scheme. 
	The last two follow from $m_{\mathtt{1}} \omega_{-,\mathtt{1}} \omega_{-,\mathtt{1}} = q_{\mathtt{01}}^{\mathtt{1}} \omega_{-,\mathtt{0}} + q_{\mathtt{11}}^{\mathtt{1}} \omega_{-,\mathtt{1}} + q_{\mathtt{21}}^{\mathtt{1}} \omega_{-,\mathtt{2}}$ and $q_{\mathtt{01}}^{\mathtt{1}}+ q_{\mathtt{11}}^{\mathtt{1}} + q_{\mathtt{21}}^{\mathtt{1}} = m_{\mathtt{1}}$ in Q-polynomial association scheme.
\end{proof}

Now we exhibit the algorithm to generate feasible relation-distribution diagrams. 
In a nutshell the algorithm mechanizes the arguments in \cite{MR3772733}. 

\begin{algorithm} 
	\caption{Algorithm to generate feasible relation-distribution diagrams}\label{alg:mussel}
	\begin{algorithmic}[1]
        \Function{GenerateDiagram}{Diagram, ToDo, Cosines}
        	\If{ToDo $= \emptyset$}
        		\State{Print the diagram and the cosines}
        		\State{\Return}
        	\EndIf
        	\State{Current $\gets$ first of ToDo}
	        \State{NewToDo $\gets$ rest of ToDo}
        	\State{RemainValency $\gets k_1 - $ total weight of neighbours of Current in Diagram}
        	\ForAll{Arrangement of RemainValency} \Comment{See \cref{rmk:accusative} \cref{itm:yeorling}}
        		\State{NewDiagram $\gets$ compose Diagram with weighted arcs in Arrangement}
        		\State{NewToDo $\gets$ compose NewToDo with new relations in Arrangement}
        		\If{NewDiagram is not valid} \Comment{See \cref{rmk:accusative} \cref{itm:septentrionally}}
        			\State{\textbf{continue}}
        		\EndIf
        		\State{Solutions $\gets$ solutions of equations} \Comment{See \cref{rmk:accusative} \cref{itm:versicolorous}}
        		\If{solutions can not be determined}
        			\ForAll{possible value of the cosines} \Comment{See \cref{rmk:accusative} \cref{itm:saprolegnious}}
        				\State{NewCosines $\gets$ compose Cosines with this value}
        				\State{\Call{GenerateDiagram}{NewDiagram, ToDo, NewCosines}}
        			\EndFor
        		\Else
	        		\ForAll{Solution $\in$ Solutions}
	        			\If{Solution is not valid} \Comment{See \cref{rmk:accusative} \cref{itm:polygamical}}
		        			\State{\textbf{continue}}
		        		\EndIf
		        		\State{NewCosines $\gets$ compose Cosines with Solution}
		        		\State{\Call{GenerateDiagram}{NewDiagram, NewToDo, NewCosines}}
	        		\EndFor
	        	\EndIf
        	\EndFor
        \EndFunction
	\end{algorithmic}
\end{algorithm}

\begin{figure}
	\label{fig:hemistrumectomy}
	\centering
	\includegraphics[width=0.8\textwidth]{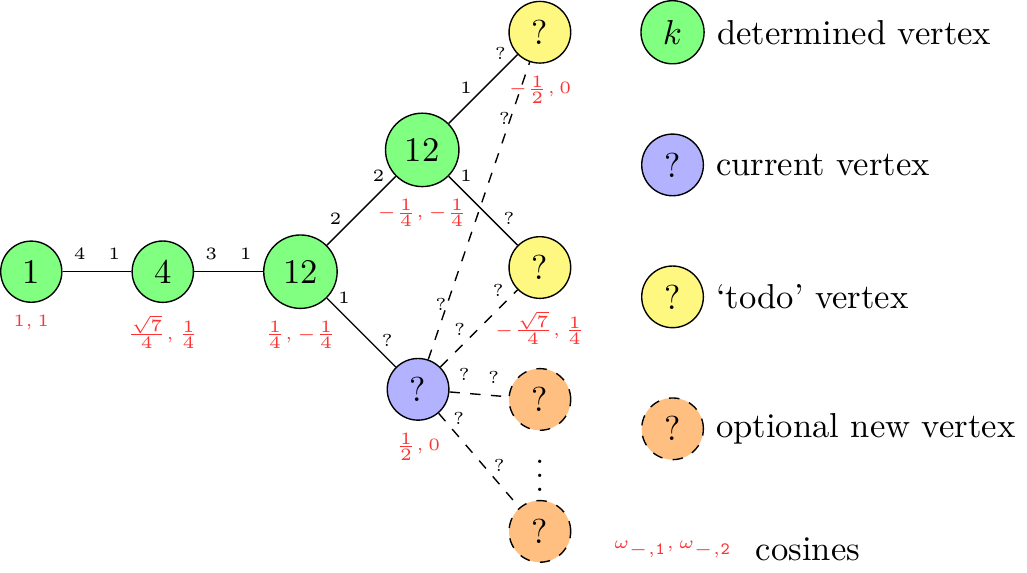}
	\caption{an intermediate step occurred when applying \cref{alg:mussel} to $\Gamma(x_0) = N_4$ in \cref{lem:automatograph}}
\end{figure}

\begin{proof}
	\cref{alg:mussel} generates feasible relation-distribution diagram of the nearest neighbourhood relation together with the first few (two is enough for the purpose of this paper) columns of the cosine matrix by iterative search with pruning. 
	The search follows the order of the breadth-first search of the diagram. 
	For each iteration, it deals with one vertex of the diagram, called \emph{Current}, which is the first among the list \emph{ToDo} of all undetermined vertices. 
	We exhaust all possible arrangements of the neighbours of the vertex \emph{Current} and the corresponding intersection numbers around this vertex. 
	There are finitely many such arrangements since the valency of the nearest neighbourhood relation is fixed. 
	The vertex may have neighbours which have not been exploited and we will add the new vertices to \emph{ToDo}.
	For each possible arrangement, we try to solve the cosines of the new vertices. 
	If there are not too many new vertices, then we have enough equations to determine the cosines. 
	Otherwise we can exhaust possible cosines with the help of \cref{prop:cateress}.
	Once the cosines are obtained, we can move on to the next step of iteration. 
	Finally the iteration depth is bounded by \cref{thm:grouped}. 
	Note that for each iteration, the cosines of every vertex (that has been exploited) in the diagram are known while the corresponding intersection numbers are known only for determined vertices but not those in the \emph{ToDo} list. 
	The pruning can be adopted whenever possible. 
	See the \cref{rmk:accusative} \cref{itm:septentrionally,itm:polygamical,itm:saprolegnious,itm:caliper} for detail. 
\end{proof}

\begin{remark} \label{rmk:accusative}
	Here we clarify some steps in the algorithm.
	\begin{enumerate}
		\item When considering the arrangement of the remaining valency, the weight can be distributed to arcs among existing vertices as well as new vertices. \label{itm:yeorling}
		\item For the purpose of this paper, we consider the equations $k_1 \omega_{1,-} \omega_{i,-} = \sum_{h=0}^d p_{h1}^i \omega_{h,-}$ where $-$ is $\mathtt{1}$ or $\mathtt{2}$, and $m_{\mathtt{1}} \omega_{-,\mathtt{1}} \omega_{-,\mathtt{1}} = \sum_{\mathtt{h=0}}^\mathtt{2} q_{\mathtt{h1}}^{\mathtt{1}} \omega_{-,\mathtt{h}}$ where $-$ ranges over all new relations. \label{itm:versicolorous}
		\item A diagram is valid if none of properties of relation-distribution diagram is violated. 
		In particular, it must satisfy Yamazaki's Lemma and the identity $k_h p_{ij}^h = k_i p_{jh}^i = k_j p_{hi}^j$. 
		\label{itm:septentrionally}
		\item There are finitely many possible solutions due to \cref{prop:cateress}. 
		Note that one can exhaust some cosine values and leave the rest to the equations. \label{itm:saprolegnious}
		\item A solution is valid if none of properties of the cosine sequences is violated. 
		In particular, the cosines are real and satisfy \cref{thm:massageuse}. 
		Moreover the cosine sequence corresponding to the first primitive idempotent $E_{\mathtt 1}$ of a $Q$-polynomial association scheme has distinct values. \label{itm:polygamical}
		\item It is useful to fix the splitting field at the very beginning. \label{itm:caliper}
		\item The algorithm can be initialized with any incomplete relation-distribution diagram together with cosines. 
		One can always start with two relations (relation $R_0$ and $R_1$). 
	\end{enumerate}
\end{remark}

\subsection{Local characterization of graphs}

We say a graph $G$ is locally a graph $H$ if for every vertex $v$ of $G$, the induced subgraph of $G$ on the neighbourhood of $v$ is isomorphic to $H$.
There are extensive studies on such object. 
Here we only need some simple cases.

\begin{theorem} [\cite{MR635704}] \label{thm:spindlelike}
	The only connected locally $C_5$ graphs is the 1-skeleton of the icosahedron.
\end{theorem}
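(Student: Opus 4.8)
The plan is to reconstruct $G$ vertex by vertex from an arbitrary base point, expanding in concentric layers around it and using the local $C_5$ condition to force the size and all adjacencies of each successive layer, until $G$ is pinned down as the $12$-vertex icosahedron.

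First I would note that being locally $C_5$ forces $G$ to be $5$-regular, fix a vertex $v_0$, and write $N(v_0)$ as the $5$-cycle $v_1v_2v_3v_4v_5v_1$ (indices read mod $5$). The first real step is a local analysis at each $v_i$: its neighbourhood is a $5$-cycle containing $v_0,v_{i-1},v_{i+1}$, and since $v_{i-1}\not\sim v_{i+1}$ while both are adjacent to $v_0$, that cycle must read $v_{i-1}\,v_0\,v_{i+1}\,u_i\,u_i'\,v_{i-1}$ for two new vertices at distance $2$ from $v_0$. Matching these up across consecutive $v_i$ — the key point being that $v_i$ and $v_{i+1}$ have a unique common neighbour besides $v_0$ — produces five distance-$2$ vertices $u_1,\dots,u_5$ with $N(v_i)=\{v_0,v_{i-1},v_{i+1},u_{i-1},u_i\}$ and $u_{i-1}\sim u_i$ for every $i$.

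Next I would rule out any premature collapse: I want the $u_i$ to be pairwise distinct, so that the distance-$2$ layer is exactly a $5$-cycle, and I want no chords among them. Both are consequences of the degree bound $5$ — a coincidence $u_i=u_j$ or a chord $u_i\sim u_{i+2}$ would force some neighbourhood to contain either six vertices or an induced $4$-cycle, neither of which is compatible with being a $5$-cycle. Once the two pentagons $v_1\cdots v_5$ and $u_1\cdots u_5$ are in place the rest is forced: each $u_i$ has exactly one further neighbour $z_i$, necessarily at distance $3$ from $v_0$; the local $C_5$ at $u_i$ forces $z_i\sim u_{i-1}$ and $z_i\sim u_{i+1}$; and comparing these relations around the cycle collapses all the $z_i$ to a single vertex $z$ with $N(z)=\{u_1,\dots,u_5\}$. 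Connectedness then gives $V(G)=\{v_0\}\cup\{v_1,\dots,v_5\}\cup\{u_1,\dots,u_5\}\cup\{z\}$ with precisely the edges of the icosahedron.

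The hard part will be the distinctness-and-no-chord step: that is the only place with genuine content, since everything after it is forced and everything before it is immediate. An alternative route I would at least mention is topological: the clique complex of a locally $C_5$ graph contains no $K_4$, so it is a closed triangulated surface in which every vertex has degree $5$; Euler's formula then yields $\chi=|V|/6$, hence $|V|\in\{6,12\}$, the value $6$ being excluded because it would force $G=K_6$ (which is not locally $C_5$), and the classical uniqueness of the $5$-regular triangulation of the sphere identifies $G$ with the icosahedron.
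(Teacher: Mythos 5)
The paper does not prove this statement at all: it is quoted from the literature (the citation \cite{MR635704}), so there is no in-paper argument to compare against. Your layered reconstruction is the standard proof of this classification and it is essentially correct: $5$-regularity, the fact that adjacent vertices have exactly two common neighbours (forcing the five vertices $u_1,\dots,u_5$ and the edges $u_{i-1}u_i$), the exclusion of coincidences and chords in the second layer, and the collapse of the third layer to a single antipodal vertex all go through. Two small cautions on the step you rightly flag as the crux: the obstruction you name (``six vertices or an induced $4$-cycle'') is not quite the one that arises in every subcase --- e.g.\ $u_i=u_{i+1}$ makes $N(v_{i+1})$ have only \emph{four} vertices, and $u_i=u_{i+2}$ forces a vertex of degree $3$ (equivalently a triangle through $v_{i+1},v_{i+2}$) inside the $5$-cycle $N(u_i)$ rather than a $4$-cycle --- so a complete write-up needs a short case check there, though each case does die by the same kind of local analysis. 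Your alternative topological route (clique complex is a closed surface, $\chi=|V|/6$, hence $|V|\in\{6,12\}$ and only the icosahedron survives) is also sound and is arguably slicker, at the price of invoking the uniqueness of the $5$-regular sphere triangulation.
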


\begin{theorem} [\cite{MR826793}] \label{thm:hotmouthed}
	The only connected locally octahedron graph is the 1-skeleton of the 4-dimensional hyperoctahedron.
\end{theorem}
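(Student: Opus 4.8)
The plan is to use the fact that the octahedron graph is $K_{2,2,2}$, the complement of a perfect matching $3K_2$ on six vertices, so that inside every neighbourhood of $G$ each vertex has a \emph{unique} non-neighbour. Let $G$ be connected and locally octahedron. Since the octahedron is $4$-regular on $6$ vertices, $G$ is $6$-regular, and since every vertex of $N(v)$ has degree $4$ in the octahedron $N(v)$, we get $|N(u)\cap N(v)| = 4$ for every edge $uv$ of $G$.

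First I would fix a vertex $v$, write $N(v) = \{a_1,a_2,b_1,b_2,c_1,c_2\}$ with the three non-edges of the octahedron $N(v)$ being $a_1a_2$, $b_1b_2$, $c_1c_2$, and examine $N(a_1)$. The common neighbours of $v$ and $a_1$ are exactly the four neighbours of $a_1$ inside $N(v)$, namely $b_1,b_2,c_1,c_2$; together with $v$ these account for five of the six vertices of the octahedron $N(a_1)$, so there is a unique remaining vertex $w$, with $w\sim a_1$, $w\ne v$, and $w\notin N(v)$. Inside $N(a_1)$ the vertex $v$ is adjacent to $b_1,b_2,c_1,c_2$, hence its unique non-neighbour in that octahedron is $w$, so $v\not\sim w$; and since $b_1b_2$ and $c_1c_2$ are already non-edges of $G$, the three non-edges of $N(a_1)$ must be $\{v,w\}$, $\{b_1,b_2\}$, $\{c_1,c_2\}$. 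In particular $w$ is adjacent to each of $b_1,b_2,c_1,c_2$.

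The key step is to propagate this to all of $N(v)$. Running the same analysis with $b_1$ in place of $a_1$ yields a vertex $w'$ which is the unique neighbour of $b_1$ lying outside $\{v\}\cup N(v)$; but $w$ is adjacent to $b_1$, is distinct from $v$, and lies outside $N(v)$, so $w=w'$. Thus the same vertex $w$ is produced from every neighbour of $v$, and combining the neighbourhood descriptions shows $w$ is adjacent to all six vertices of $N(v)$ while $w\not\sim v$. Then the eight vertices $\{v,w\}\cup N(v)$ induce exactly $K_{2,2,2,2}$, with antipodal classes $\{v,w\}$, $\{a_1,a_2\}$, $\{b_1,b_2\}$, $\{c_1,c_2\}$; moreover each of these eight vertices already has all six of its neighbours in this set --- for $v$ and $w$ this is immediate, and for $a_1,a_2,b_1,b_2,c_1,c_2$ it is exactly the description of the neighbourhood octahedron obtained above. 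Since $G$ is connected, $G$ equals this $K_{2,2,2,2}$, the $1$-skeleton of the $4$-dimensional hyperoctahedron; note that finiteness of $G$ need not be assumed.

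I expect the only genuine friction to be this coincidence step: one must argue cleanly that the ``extra'' vertex $w$ attached at different neighbours of $v$ is one and the same, rather than first collecting several candidate vertices and then merging them by an \emph{ad hoc} count. Characterising $w$ simultaneously as ``the unique neighbour of $a_1$ outside $\{v\}\cup N(v)$'' and, one step later, as a neighbour of $b_1$ with the same avoidance property keeps this bookkeeping short and sidesteps any case analysis on isomorphism types of induced subgraphs. Everything else is forced by the uniqueness of non-neighbours within octahedral neighbourhoods.
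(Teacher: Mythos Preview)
Your argument is correct. The paper does not actually prove this theorem; it merely quotes the result from the literature (the citation \cite{MR826793}), so there is no proof in the paper to compare against. Your self-contained argument is therefore strictly more than what the paper provides.

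A couple of minor remarks on the write-up. After the $a_1$-analysis you already know $w\sim b_1,b_2,c_1,c_2$, so $w$ is automatically the unique ``extra'' vertex for each of $b_1,b_2,c_1,c_2$; the $b_1$-analysis is only needed to pick up $w\sim a_2$ (and to re-confirm $w\sim a_1$), after which $w$ is adjacent to all six neighbours of $v$. You state this correctly, but it is worth making explicit that two neighbourhood checks suffice rather than six. Also, once $N(w)\supseteq N(v)$ and $|N(w)|=6$ you get $N(w)=N(v)$ immediately, and then each $x\in N(v)$ has its six neighbours among $\{v,w\}\cup(N(v)\setminus\{x'\})$ where $x'$ is its antipode in $N(v)$; this is exactly what you say, and it closes off the induced subgraph as $K_{2,2,2,2}$ with no further edges leaving it, forcing $G=K_{2,2,2,2}$ by connectedness. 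Your observation that finiteness is not needed is correct and worth keeping.
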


\begin{proposition} \label{prop:integrally}
	If a connected graph $G$ is locally the Rook graph $K_3 \square K_2$, then $G$ is isomorphic to the Johnson graph $J(5,2)$. 
\end{proposition}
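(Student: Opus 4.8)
The plan is to reconstruct $G$ from the family of its maximal $4$-cliques: these will turn out to be five in number, and $V(G)$ will be recovered as the set of unordered pairs of them. Write $P = K_3 \square K_2$ for the triangular prism. I would start by recording the elementary structure of $P$: it has exactly two triangles, they are vertex-disjoint, the three remaining edges form a perfect matching between them, and $P$ is $K_4$-free, so every clique of $P$ has at most three vertices. Since $G$ is locally $P$, this immediately yields that $G$ is $6$-regular, that $G$ contains no $K_5$, and that every $4$-clique of $G$ is maximal. Call a $4$-clique of $G$ a \emph{grand clique}; equivalently, a grand clique is a set $\set{v} \cup T$ where $T$ is one of the two triangles of the prism $N(v)$. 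Using the disjointness of the two triangles of a prism, one checks the incidence facts I will need: every vertex lies in exactly two grand cliques, and these two intersect in just that vertex; two distinct grand cliques meet in at most one vertex; two vertices are adjacent if and only if they lie in a (then unique) common grand clique; and consequently, for each $v$ the two grand cliques through $v$ partition $N(v)$ into the two triangles of the prism $N(v)$.

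Next, let $\mathcal L$ be the set of grand cliques, regarded as an abstract ``point set'', and define $\varphi \colon V(G) \to \binom{\mathcal L}{2}$ by sending $v$ to the pair of grand cliques containing it. The incidence facts make $\varphi$ injective, and give $v \sim w$ in $G$ if and only if $\varphi(v) \cap \varphi(w) \neq \emptyset$; thus $\varphi$ identifies $G$ with the \emph{induced} subgraph $H$ of the Johnson graph $J(\abs{\mathcal L}, 2)$ supported on $\varphi(V(G))$. Moreover, for each point $L \in \mathcal L$ the vertices of $H$ containing $L$ are exactly the images of the four vertices of the grand clique $L$, so every ``star'' of $H$ has exactly four vertices. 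In particular $H$ is a connected, $6$-regular, induced subgraph of $J(\abs{\mathcal L}, 2)$ which is locally $P$.

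It remains to pin down $\abs{\mathcal L}$ and show that $H$ is the whole of $J(5,2)$, by a local analysis inside $H$. Fix a vertex $p = \set{a,b}$ of $H$. The star at $a$ is $\set{p} \cup \set{\set{a,x} : x \in S_a}$ for some $3$-element set $S_a$ of points, similarly for $b$, and these two stars meet only in $p$; since each ``star minus $p$'' is a triangle in $N_H(p)$, and $N_H(p) \cong P$ has exactly two triangles, these are precisely the two triangles of $N_H(p)$, whereas edges between them only join $\set{a,x}$ to $\set{b,x}$. Demanding that the matching between the two triangles of $N_H(p)$ be perfect forces $S_a = S_b =: S_p$. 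Put $T := \set{a,b} \cup S_p$, a $5$-element set. Now I would chase the neighbourhoods of the six vertices of $N_H(p)$, each of the form $\set{a,x}$ or $\set{b,x}$ with $x \in S_p$: using the prism structure at each of them together with the size-four-star property, one finds that all of their neighbourhoods are contained in $\binom T 2$ and that in fact every $2$-subset of $T$ occurs as a vertex of $H$, so that the induced subgraph of $H$ on $\binom T 2$ is a copy of $J(5,2)$. This induced copy is already $6$-regular, and $H$ is connected and $6$-regular, so no vertex of $\binom T 2$ can have a neighbour outside $\binom T 2$; hence $V(H) = \binom T 2$ and $H \cong J(5,2)$. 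Transporting back through $\varphi$ gives $G \cong J(5,2)$.

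The step I expect to be the crux is the last one: ruling out the possibility that $G$ sits as a \emph{proper} induced subgraph of a larger Johnson graph $J(n,2)$. The neighbourhood-closure argument does precisely this, and its heart is a rigidity coming from the \emph{perfect} matching in each prism neighbourhood, which pairs the two points of a vertex $\set{a,b}$ with exactly the same three further points; the grand-clique incidences then propagate this rigidity through the connected graph until the point set collapses to five. (Alternatively, one could try to verify directly that $G$ is strongly regular with parameters $(10,6,3,4)$ and invoke the uniqueness of the strongly regular graph with those parameters; but checking that $\lambda$ and $\mu$ are constant requires essentially the same local analysis, so I would prefer the self-contained grand-clique route.)
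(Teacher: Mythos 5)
Your proof is correct, but it takes a genuinely different route from the paper's. The paper argues directly on the distance partition from a base vertex $x_0$: it labels the prism $\Gamma(x_0)$ as two triangles $\{a,b,c\}$, $\{\overline{a},\overline{b},\overline{c}\}$ joined by a matching, observes that each second neighbour is adjacent to $x$ iff to $\overline{x}$ (hence has at least four neighbours in $\Gamma(x_0)$), and then double-counts the $6\times 2=12$ edges between the first and second neighbourhoods to force exactly three second neighbours forming a triangle, which identifies $G$ with $J(5,2)$. You instead run a clique-geometry argument: since the prism is $K_4$-free with exactly two disjoint triangles, the maximal $4$-cliques form a partial linear space in which every vertex lies on exactly two lines and every line carries four points; the map sending a vertex to its pair of lines embeds $G$ as an induced subgraph of a Johnson graph $J(n,2)$, and the perfect matching inside each prism neighbourhood supplies the rigidity ($S_a=S_b$) that collapses the relevant point set to five elements, after which regularity and connectedness close the argument. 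The paper's proof is shorter and purely a counting argument; yours is more structural, reconstructs the $5$-element ground set canonically as the set of grand cliques, and is the template that generalizes to other locally-$H$ recognition problems (it is essentially the line-graph/clique-geometry method). Both require comparable local verification; your key steps (every $4$-clique is maximal, two grand cliques meet in at most one vertex, adjacency equals collinearity, and the neighbourhood-closure step at the end) all check out.
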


\begin{proof}
	Fix a vertex $x_0$ of $G$, we consider the first $H_1(x_0)$ and second neighbourhood $H_2(x_0)$ of $x_0$. 
	By assumption the first neighbourhood of $x_0$ induces a Rook graph $K_3 \square K_2$.
	Let $H_1(x_0) = \set{a,b,c,\overline{a},\overline{b},\overline{c}}$ be the first neighbourhood of $x_0$ such that $\set{a,b,c}$ and $\set{\overline{a},\overline{b},\overline{c}}$ are two triangles and that there are edges between $x$ and $\overline{x}$ where $x$ is $a,b$ or $c$. 
	For convenience we define $\overline{\overline{x}} := x$.

	Now we consider the neighbourhood of $a$, which should induce a Rook graph $K_3 \square K_2$ as well. 
	We've already got $4$ neighbours of $a$, which are $x_0, \overline{a},b$ and $c$. 
	Among them $\set{x_0,b,c}$ induces a triangle. 
	So the other two neighbours of $a$, say $u,v$ should form a triangle together with $\overline{a}$ and be adjacent of one of $b$ and $c$ each. 
	Let us name the vertices in $H_2(x_0)$ by the concatenation of its neighbours in $H_1(x_0)$. 
	Note that the above argument shows that for every vertex $w \in H_2(x_0)$, it is adjacent to $x \in H_1(x_0)$ if and only if it is adjacent to $\overline{x} \in H_1(x_0)$. 
	We may rename $u,v$ as $a\overline{a}b\overline{b}$ and $a\overline{a}c\overline{c}$.
	This tells us that every vertex in $H_2(x_0)$ has at least four neighbours in $H_1(x_0)$. 
	Let us do double counting on the edges between $H_1(x_0)$ and $H_2(x_0)$. 
	There are $6 \times 2$ edges. 
	Recall that there are at least two vertices in $H_2(x_0)$ and they are connected. 
	Hence there must be exactly three vertices in $H_2(x_0)$. 
	They form a triangle and each has exactly four neighbours in $H_1(x_0)$. 
	Consequently $H_2(x_0)$ has to be $\set{a\overline{a}b\overline{b}, a\overline{a}c\overline{c}, b\overline{b}c\overline{c}}$ and $G$ is the Johnson graph $J(5,2)$.
\end{proof}

\section{Proof of Main Theorem} \label{sec:Crocus}

In this section, $X$ is always the spherical embedding of a partially metric Q-polynomial association scheme with $m_{\mathtt 1} = 4$. 
We use $R_1$ to denote the partially metric nearest neighbourhood relation of $\mathfrak{X}$, and $\Gamma$ for its scheme graph. 
Naturally $R_2$ is reserved for the distance two relation of $R_1$. 
For a fixed point $x_0 \in X$, the induced subgraph on its neighbourhood $R_1(x_0)$ is denoted by $\Gamma(x_0)$. 
The points $R_1(x_0) \subset X$ are on an affine hyperplane, hence they are on a translated sphere $S^2$. 
For convenience, we say $R_1(x_0)$ is geometrically an object $O$ if the points $R_1(x_0)$ form a copy of the object $O$ in the spherical embedding $X$.

The proof of \cref{thm:Comprehension} is divided into two steps. 
First we classify all possible local structure of the spherical embedding. 
Secondly we determine the association schemes which carry the classified local structure. 
In some cases, \cref{alg:mussel} is used to exhaust feasible relation-distribution diagram associated to the nearest neighbourhood relation. 

\begin{lemma} \label{lem:automatograph}
	Under the assumptions of \cref{thm:Comprehension}, one of the following is true.
	\begin{enumerate}
		\item $R_1(x_0)$ is geometrically a triangle, and $\Gamma(x_0)$ is the empty graph $N_3$ or the complete graph $K_3$;
		\item $R_1(x_0)$ is geometrically a tetrahedron, and $\Gamma(x_0)$ is the empty graph $N_4$ or the complete graph $K_4$;
		\item $R_1(x_0)$ is geometrically a $2$-antiprism, and $\Gamma(x_0)$ is the perfect matching $2K_2$ or the $4$-cycle;
		\item $R_1(x_0)$ is geometrically a pentagon, and $\Gamma(x_0)$ is the $5$-cycle;
		\item $R_1(x_0)$ is geometrically a uniform $3$-prism, and $\Gamma(x_0)$ is the Rook graph $K_3 \square K_2$;
		\item $R_1(x_0)$ is geometrically an octahedron, and $\Gamma(x_0)$ is the octahedron graph.
	\end{enumerate}
\end{lemma}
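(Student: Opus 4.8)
The plan is to analyze the local structure at a fixed vertex $x_0$ using the fact that the neighbours $R_1(x_0)$ form a spherical code on a two-dimensional sphere $S^2$ (since $m_{\mathtt 1} = 4$ and the points lie on an affine hyperplane). First I would observe that $\Gamma(x_0)$ has at most two possible inner products among its points: $\alpha_1$ (for adjacent pairs in $\Gamma$, i.e. pairs in $R_1$) and $\alpha_2$ (for pairs at distance $2$, i.e. in $R_2$), where $\alpha_1 = \alpha_{\max}$ and $\alpha_2 < \alpha_1$ by the definition of the nearest neighbourhood relation. Thus $R_1(x_0)$ is a spherical $2$-code on $S^2$, and by \cref{thm:progenity} (the $d=3$, $s=2$ case worked out in the Example) we get $\abs{R_1(x_0)} = k_1 \leq 9$. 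Moreover, the partially metric hypothesis forces $\Gamma(x_0)$ to be a strongly regular graph (or a disjoint union / complete multipartite-type graph): every vertex of $\Gamma(x_0)$ has $a_1$ neighbours within $\Gamma(x_0)$, so $\Gamma(x_0)$ is $a_1$-regular on $k_1$ vertices, and counting common neighbours via $p_{11}^1$ and $p_{11}^2$ shows it is strongly regular with parameters determined by the intersection numbers. So the first real step is: enumerate the strongly regular graphs (allowing imprimitive ones) on at most $9$ vertices that can arise, i.e. whose two eigenvalues $\theta_1 > \theta_2$ satisfy the geometric constraint that $\{1, \alpha_1, \alpha_2\}$ together with the eigenvalue equation of $\Gamma(x_0)$ is consistent with $R_1(x_0)$ being a genuine spherical embedding on $S^2$.

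Next I would turn the eigenvalue data into geometry. For each candidate $\Gamma(x_0)$, the Gram matrix of the embedded neighbours is $G = I + \alpha_1 A(\Gamma(x_0)) + \alpha_2 (J - I - A(\Gamma(x_0)))$, and this matrix must be positive semidefinite of rank at most $3$ (by \cref{prop:noncreative}, since the neighbours lie on a translated $S^2 \subset \mathbb{R}^3$ inside $\mathbb{R}^4$ — more precisely, after translation they span a space of dimension at most $3$). Writing $G$ in terms of the eigenvalues $\theta_i$ of $A(\Gamma(x_0))$, the rank condition says that all but at most three of the numbers $1 + \alpha_1 \theta_i + \alpha_2(k_1 - 1 - \theta_i)$ vanish, and nonnegativity of the rest pins down $\alpha_1, \alpha_2$ in terms of $\theta_1, \theta_2, k_1$. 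Running through the short list of strongly regular graphs on $\leq 9$ vertices — $K_3$, $N_3$ (as $3 K_1$), $K_4$, $N_4$, $2K_2$, $C_4$, $C_5$, the Rook graph $K_3 \square K_2$ on $6$ vertices, $K_{2,2,2}$, the Paley/Petersen-type graphs on $9$, etc. — and discarding those for which no valid $(\alpha_1, \alpha_2)$ exists or for which the resulting configuration is not realized by an actual spherical $2$-code on $S^2$, should leave exactly the six geometric objects in the statement (triangle, tetrahedron, $2$-antiprism, pentagon, uniform $3$-prism, octahedron), each paired with its two possible graphs $\Gamma(x_0)$ (the graph and its complement, corresponding to the two choices of which inner product is the larger). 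The identification of each surviving configuration with a named polytope/object is then a direct geometric check: e.g. a $6$-point $2$-code on $S^2$ whose graph is $2K_2$ is forced to be the vertices of a $2$-antiprism, and whose graph is $K_3 \square K_2$ is forced to be a uniform $3$-prism.

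The step I expect to be the main obstacle is ruling out the larger strongly regular graphs on $7$, $8$, and especially $9$ vertices — in particular showing that configurations like the Paley graph $P_9$ (equivalently $K_3 \square K_3$, the Rook graph on $9$ points) or $K_{2,2,2}$ on $6$ points cannot occur as $\Gamma(x_0)$. For $9$-vertex candidates one must carefully check that the forced Gram matrix $G$ has rank strictly greater than $3$, or that the forced value of $\alpha_2$ violates $\alpha_2 < \alpha_{\max}$ or some Krein/integrality condition (via \cref{thm:massageuse} and \cref{thm:macrocarpous}, the cosines must lie in a quadratic field, and the relevant intersection numbers must be nonnegative integers). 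A secondary subtlety is handling imprimitive/disconnected $\Gamma(x_0)$ correctly: when $\Gamma(x_0)$ is $N_3$ or $N_4$ the "strongly regular" framework degenerates ($a_1 = 0$), and one must instead argue directly that $R_1(x_0)$ is a regular simplex, using that a set of $k_1$ points on $S^2$ with all mutual inner products equal to a single value $\alpha_2$ is a regular simplex and hence $k_1 \leq 4$. Once the list of admissible $(\text{geometry}, \Gamma(x_0))$ pairs is complete, the lemma follows; the harder classification of which full association schemes realize each local type is deferred to the subsequent lemmas.
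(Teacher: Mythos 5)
Your overall skeleton matches the paper's proof: the neighbours $R_1(x_0)$ lie on a translated $S^2$ and see at most two non-trivial relations (by partial metricity), so they form a spherical $2$-code and the absolute bound (\cref{thm:progenity}) gives $\abs{R_1(x_0)} \leq 9$; one then writes the local Gram matrix as $I + \beta_1 B_1 + \beta_2(J - I - B_1)$, imposes rank at most $3$ and positive semidefiniteness via \cref{prop:noncreative} together with the cosine inequalities coming from $R_1$ being the nearest neighbourhood relation, and eliminates candidates. This is exactly what the paper does, except that the paper enumerates \emph{all} regular graphs on at most $9$ vertices (the local $R_1$- and $R_2$-graphs are regular of valencies $p_{11}^1$ and $p_{12}^1$, generated e.g.\ by \textrm{GENREG}) and lets the Gram-matrix conditions do all of the elimination.

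There is, however, a genuine gap in your reduction step. You claim that ``counting common neighbours via $p_{11}^1$ and $p_{11}^2$ shows $\Gamma(x_0)$ is strongly regular'' and then propose to enumerate only strongly regular graphs. The number of common neighbours of two adjacent vertices $y,z \in R_1(x_0)$ \emph{inside} $R_1(x_0)$ is the number of $w$ with $(x_0,w),(y,w),(z,w) \in R_1$, which is a triple intersection number and is not determined by the parameters of the scheme; the axioms only give you regularity of the two local graphs, not strong regularity. This is not a harmless over-restriction: the triangular prism $K_3 \square K_2$ has spectrum $\{3,1,0^2,(-2)^2\}$ with four distinct eigenvalues, so it is \emph{not} strongly regular (despite appearing in your candidate list), yet it passes the rank-$3$ Gram test (with $\beta_1 = \tfrac{1}{7}$, $\beta_2 = -\tfrac{5}{7}$, realized by the uniform $3$-prism) and is precisely the local graph of the Johnson graph $J(5,2)$, i.e.\ case (5) of the lemma and case AS10[3] of \cref{thm:Comprehension}. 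An enumeration restricted a priori to strongly regular graphs would therefore miss one of the required conclusions. The fix is simply to drop the strong-regularity claim and run the Gram-matrix elimination over all regular graphs on at most $9$ vertices, as the paper does.
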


\begin{proof}
	Let us consider the points $R_1(x_0)$ and the relations among these points. 
	Since the nearest neighbourhood graph $\Gamma$ is partially metric, there are at most two relations, besides the trivial relation $R_0$, among the points $R_1(x_0)$. 
	Note that the points $R_1(x_0)$ are on a translated sphere $S^2$. 
	So $R_1(x_0)$ forms a spherical $1$-code or spherical $2$-code on $S^2 \subset \mathbb{R}^3$. 
	By \cref{thm:progenity}, we get $\abs{R_1(x_0)} \leq 9$.
	The scheme graph of $R_1$ (resp. $R_2$) restricted to $R_1(x_0)$ is a regular graph of valency $p_{11}^1$ (resp. $p_{12}^1$). 
	The connected regular graphs of small order are classified and they can be generated by the program \textrm{GENREG}, see \cite{MR1665972}. 
	We denote by $B_1$ the adjacency matrix of $\Gamma(x_0)$. 
	Then the Gram matrix of $R_1(x_0)$ on $S^2$ is equal to $\Gram = I + \beta_1 B_1 + \beta_2 (J-I-B_1)$. 
	For each regular graph with at most $9$ vertices, we obtain many equations on $\beta_1$ and $\beta_2$ by \cref{prop:noncreative}. 
	Moreover we have $\beta_1 < \frac{1}{2}$ and $\beta_2 < \frac{1}{2}$ because $R_1$ is the nearest neighbourhood relation.
	Most regular graphs are excluded because the corresponding system of equations and inequalities has no solution. 
	The remaining graphs are exactly those listed in the statement of \cref{lem:automatograph}. 
\end{proof}

\begin{proof}[Proof of \cref{thm:Comprehension}]
	We consider each case in \cref{lem:automatograph} separately. 

	If $\Gamma(x_0)$ is the octahedron graph, then by \cref{thm:hotmouthed} the only connected locally octahedron graph is the $16$-cell graph. 
	The induced association scheme AS08[2] is Q-polynomial with first multiplicity $m_{\mathtt 1} = 4$. 

	If $\Gamma(x_0)$ is the Rook graph $K_3 \square K_2$, then by \cref{prop:integrally} $\Gamma$ is the Johnson graph $J(5,2)$. 
	The induced association scheme AS10[3] is Q-polynomial with first multiplicity $m_{\mathtt 1} = 4$. 

	If $\Gamma(x_0)$ is the $5$-cycle, then by \cref{thm:spindlelike} the only connected locally pentagon graph is the icosahedron graph. 
	The induced association scheme is Q-polynomial with first multiplicity $m_{\mathtt 1} = 3$. 

	If $\Gamma(x_0)$ is the $4$-cycle, it is straightforward to show that $\Gamma$ is the octahedron graph. 
	The induced association scheme is Q-polynomial with first multiplicity $m_{\mathtt 1} = 3$. 

	If $\Gamma(x_0)$ is the complete graph $K_4$, it is straightforward to show that $\Gamma$ is the complete graph $K_5$. 
	The induced association scheme is Q-polynomial with first multiplicity $m_{\mathtt 1} = 4$. 
	Please note that $K_5$ is not partially metric.

	If $\Gamma(x_0)$ is the complete graph $K_3$, it is straightforward to show that $\Gamma$ is the complete graph $K_4$. 
	The induced association scheme is Q-polynomial with first multiplicity $m_{\mathtt 1} = 3$. 

	If $\Gamma(x_0)$ is the perfect matching $2K_2$, the empty graph $N_3$ or the empty graph $N_4$, then we use \cref{alg:mussel} to classify such association schemes. 
	In particular when $\Gamma(x_0)$ is the empty graph $N_4$, we have $k_1 = m_{\mathtt{1}} = 4$ and $a_1 = 0$. 
	So $E_{\mathtt{1}}$ is a light tail and thus $q_{\mathtt{11}}^{\mathtt{1}} = 0$. 
	The algorithm returns the relation-distribution diagrams of the complete bipartite graph $K_{3,3}$, Rook graph $K_3 \square K_3$, Crown graph $\overline{K_2 \square K_5}$ and the tesseract graph $Q_4$. 
	One can easily show that each of them are determined by the relation-distribution diagram (or one can directly check Hanaki and Miyamoto's list since their sizes are small).
\end{proof}

\begin{remark}
	When running the algorithm, one may restrict the cosine $\omega_{1,\mathtt{1}}$ in the range $(\frac{1}{2}, 1)$. 
	Otherwise the spherical embedding of the association scheme is a spherical $[-1,\frac{1}{2}]$-code, hence its size is bounded by $24$ by \cref{thm:chorea}. 
	Again such association schemes can be found in \cite{MR1972022}.
\end{remark}

\section{Discussion} \label{sec:Dot}

The classification of Q-polynomial association scheme with $m_{\mathtt 1} = 4$ is yet not complete. 
The strategy of the classification comes from \cite{MR1608326}, briefly first classifying the local structure and then finding the association schemes which carry these structures. 
If we keep the work in this line, then we need to classify all possible local structures, which requires more computation resources. 
More specifically, we need to classify the spherical $s$-distance set in $\mathbb{R}^3$ where each distance among the points forms a regular graph. 
Some computation in this direction is done in \cite{2018arXiv180406040S}. 
The other difficulty comes from the number of optional new vertices in each step of the iteration. 
When there are many new vertices, we don't have enough equations to determine them. 
Then we have to exhaust all possible cosines. 
The number of possible cosines is related to the valency $v$ of the relation. 
If there are $t$ free variables, then we need to exhaust $v^t$ cases, which grows exponentially. 

We establish an algorithm to generate feasible relation-distribution diagram together with the cosines. 
It may be useful to other classification problems in the study of association schemes. 
It is most efficient when the maximum degree in the relation-distribution diagram is small. 

Note that every case in \cref{thm:Comprehension} is in fact metric rather than merely partially metric. 
This leads us to the following questions.
\begin{question}
	Does there exists an integer $t \geq 2$ such that every $t$-partially metric Q-polynomial association scheme is metric?
\end{question}
\begin{question}
	Does there exists an integer $t \geq 2$ such that every $t$-partially cometric P-polynomial association scheme is cometric?
\end{question}

\section*{Acknowledgements}
This research is guided by Eiichi Bannai. 
The author would like to thank Eiichi Bannai, Peter Cameron, Tatsuro Ito, Jack Koolen, William Martin and Ferenc Sz\"{o}ll\H{o}si for useful discussions. 
This work was supported in part by NSFC [Grant No. 11271257, 11671258] and STCSM [Grant No. 17690740800].

\bibliographystyle{plain}
\bibliography{partiallymetric}

\end{document}